\documentclass[10pt]{article}

\usepackage[all]{xy}
\usepackage{amsmath}
\usepackage{amsfonts}
\usepackage{amssymb}
\usepackage{amsthm}
\usepackage{bbm}
\usepackage{epsfig}

\newtheorem{theorem}{Theorem}[section]

\newtheorem{proposition}[theorem]{Proposition}
\newtheorem{corollary}[theorem]{Corollary}

\newtheorem{claim}[theorem]{Claim}
\newtheorem{definition}[theorem]{Definition}
\newtheorem{algorithm}[theorem]{Algorithm}

\newtheorem{fact}[theorem]{Fact}

\newenvironment{prf}[1]{\trivlist 
\item[\hskip \labelsep{\it 
#1.\hspace*{.3em}}]}{~\hspace{\fill}~$\square$\endtrivlist} 

\newcommand{\xg}{\mathbb{G}}

\newcommand{\xp}{\mathbb{P}}
\newcommand{\xq}{\mathbb{Q}}
\newcommand{\xf}{\mathbb{F}}

\newcommand{\xz}{\mathbb{Z}}

\newcommand{\spec}[1]{\mathrm{Spec}(#1)}


\title{Explicit Frobenius lifts on elliptic curves}

\author{Robert Carls}

\begin{document}

\maketitle

{\small 
\begin{center}
\begin{tabular}{ll}
Robert Carls \\
\texttt{robert.carls@uni-ulm.de} \\
Universit\"{a}t Ulm \\
Institut f\"{u}r Reine Mathematik \\
D-89069 Ulm, Germany
\end{tabular}
\end{center}
}

\begin{abstract}
\noindent
In this article we give explicit formulae for a lift
of the relative Frobenius morphism between elliptic curves
and show how one can compute
this lift in the case of ordinary reduction in odd
characteristic. Our theory can also be used in the
case of supersingular reduction.
\newline
By means of the explicit formulae that describe a Frobenius lift,
we are able to generalize Mestre's $2$-adic arithmetic
geometric mean (AGM) sequence of elliptic curves
to odd characteristic, and prove its convergence.
As an application, we give an efficient point counting algorithm for ordinary
elliptic curves which is based
on the generalized AGM sequence.
\end{abstract}

\noindent
{\small
\begin{center}
\begin{tabular}{ll}
KEYWORDS: & Frobenius Lift, Point Counting, Arithmetic Geometric Mean
\end{tabular}
\end{center}
}

\section{Introduction}
\label{intro}

In this article we give formulae
which describe a lift of the relative Frobenius morphism
for a given elliptic curve over the $p$-adic numbers,
where $p$ is an odd prime.
These formulae are universal in the sense that
they can be used in the case of ordinary reduction,
and in the supersingular case as well.
\newline\indent
Let us first give an example in the case of ordinary
reduction.
Consider the elliptic curve $E$ over $\xq$
which is given by the equation
\[
y^2=x(x-1)(x-i), \quad i^2=-1.
\]
The curve $E$ has ordinary good reduction at the
prime $3$.
The normalized third division polynomial of $E$ is given by
\[
\psi_3(T)=T^4 + \frac{1}{3}(-4i - 4)T^3 + 2iT^2 + \frac{1}{3}.
\]
We set $K=\xq(i)[T]/(\psi_3)$.
Let $\bar{t}$ denote the class of $T$ in the quotient ring $K$.
Over $K$ the polynomial $\psi_3$ decomposes into
irreducible factors as follows
{\scriptsize
\[
\big( T-\bar{t} \big) \cdot \left(
T^3+(\bar{t}+\frac{1}{3}(-4i-4))T^2
+(\bar{t}^2+\frac{1}{3}(-4i- 4)\bar{t}+2i)T+(\bar{t}^3+ 
\frac{1}{3}(-4i-4)\bar{t}^2+2i\bar{t})
\right).
\]
}
In the following we consider the curve $E$ as defined
over the field $K$.
There exists an isogeny $F:E \rightarrow E^{(3)}$, where the curve
$E^{(3)}$ is given by the
equation
\[
y^2=x \cdot \left( x-\frac{i-\bar{t}}{1-\bar{t}} \right)
\cdot \left( x+\frac{i(1-\bar{t})}{i-\bar{t}} \right),
\]
such that $F$ reduces modulo $3$ to the relative
$3$-Frobenius morphism.
Also, using the formulae of Theorem \ref{froblift}, one can
compute explicit formulae for the isogeny $F$ as a pair
of rational functions. We will not go into the details
of this computation here.
\newline\indent
Now consider the elliptic curve $E$ given by the equation
\[
y^2=x(x-1)(x-2)
\]
over $\xq$.
The curve $E$ has supersingular good reduction at the prime $3$.
The normalized third division polynomial of $E$ is given by
\[
\psi_3(T)=T^4 - 4T^3 + 4T^2 - \frac{4}{3}.
\]
We set $K=\xq[T]/(\psi_3)$.
Let $\bar{t}$ denote the class of $T$ in the quotient $K$.
Over $K$ the polynomial $\psi_3$ decomposes in irreducible
factors as follows
\[
\big( T-\bar{t} \big) \cdot
\big( T-(2-\bar{t}) \big) \cdot
\big( T^2 - 2T + (\bar{t}^2 - 2 \bar{t}) \big).
\]
In the following we consider $E$ to be defined
over $K$. Now assume that we have chosen a
$3$-adic embedding of $K$ such that $\bar{t}$
is a non-unit.
Then the image curve $E^{(3)}$ of a lift of the relative
Frobenius $F:E \rightarrow E^{(3)}$ can be given by
\[
y^2=x \cdot \left( x- \frac{2-\bar{t}}{1-\bar{t}}
\right) \cdot \left( x -\frac{8(1-\bar{t})}{2-\bar{t}} \right).
\]
\indent
Next let us explain the relevance of the results of this article
to the algorithmic application.
Let $A$ be an abelian scheme over a $p$-adic local ring, and let $A$ have ordinary
reduction. It is a classical result that there exists
a canonical lift $F:A \rightarrow A^{(p)}$ of the
relative $p$-Frobenius morphism. By iterating the lifting one obtains
a sequence of abelian schemes
\[
A \stackrel{F}{\rightarrow} A^{(p)} \stackrel{F}{\rightarrow}
A^{(p^2)}
\rightarrow \ldots
\]
Let $A_0$ denote the reduction of $A$, where we assume
$A_0$ to be defined over a finite field $\xf_q$ with $q$ elements
of characteristic $p>0$.
There exists a canonical lift $A^*$ of $A_0$
which is characterized by the property that the reduction
map on endomorphisms is bijective.
It is a fundamental result that one has
\begin{eqnarray}
\label{limess}
\lim_{n \rightarrow \infty} A^{(q^n)} = A^*
\end{eqnarray}
with respect to the $p$-adic topology.
The precise statement and a proof are given in Section \ref{sectconv}.
\newline\indent
In the following let $p=2$.
We restrict our attention to the case where $A$ is an elliptic curve. 
The convergence theorem (\ref{limess}) forms the basis
of Mestre's AGM point counting algorithm (see the private
conversation \cite{me00}). An essential step in this algorithm
is to compute the arithmetic geometric mean (AGM) sequence
\begin{eqnarray}
\label{2adicagm}
(a_n,b_n)=\left( \frac{a_{n-1}+b_{n-1}}{2},\sqrt{a_{n-1} b_{n-1}} \right)
\end{eqnarray}
in a $2$-adic local field with finite residue field $\xf_q$
of characteristic $2$.
It turns out that the sequence (\ref{2adicagm}) describes
the coefficients of a sequence of elliptic curves with
ordinary good reduction
\begin{eqnarray}
\label{eagm}
E_n:y^2=x(x-a_n^2)(x-b_n^2), \quad n \geq 0,
\end{eqnarray}
and Frobenius lifts
\[
E_0 \stackrel{F}{\rightarrow} E_1 \stackrel{F}{\rightarrow}
E_2
\rightarrow \ldots
\]
where by (\ref{limess}) the subsequence $E_{dm}$
with $d=\log_2(q)$ approximates $2$-adically
the canonical lift of the reduction of $E_0$.
For a higher dimensional generalization of Mestre's
algorithm see the informal notes
\cite{me02}. The theoretical background of
the higher dimensional AGM sequence is given in \cite{ca05b}.
\newline\indent
In this article we give an analogue of Mestre's
sequence of elliptic curves (\ref{eagm}) in
odd residue class field characteristic.
Using the explicit formulae for a lift of the
relative Frobenius, we are able to define a
$2$-parameter analogue of Mestre's $2$-adic
AGM sequence.
We apply our results
to the point
counting problem on ordinary elliptic curves over finite fields
of odd characteristic.
We prove that, if $E$ is
an ordinary elliptic curve over a finite field $\xf_q$
with $q$ elements of characteristic $p>2$, then
one can give an algorithm for the computation of the
number of rational points $\#E(\xf_q)$ which has
the time complexity $O \big( p^{2+ \epsilon} \log_p(q)^{3 + \epsilon}
\big)$
for all $\epsilon>0$.
The algorithm that we give is based on the computation
of the $p$-adic analogue of Mestre's $2$-adic
AGM sequence.

\subsection*{Leitfaden}

In Section \ref{sectconv} we give a proof of the convergence theorem
for the $p$-adic scheme theoretic analogue of Mestre's AGM sequence.
In Section \ref{explicit} we give explicit formulae for
a lift of the relative Frobenius in the elliptic curve case.
In Section \ref{alggo} we give an algorithmic application
of these formulae.
In Section \ref{exagm} we give an example of the
generalized $p$-adic AGM sequence.

\section{Canonical Frobenius lift in the ordinary case}
\label{sectconv}

In this section we give some theoretical background
on Frobenius lifts.
Let $R$ be a complete noetherian local ring. We assume
$R$ to have perfect residue class
field $k$ of characteristic $p>0$. By $\mathfrak{m}_R$ we denote
the maximal ideal of $R$.
Let $\pi:A \rightarrow \spec{R}$ be an abelian scheme
which has ordinary reduction.
\begin{proposition}
\label{frobeniusoverr}
There exists an abelian scheme $\pi^{(p)}:A^{(p)} \rightarrow
\spec{R}$ and a commutative diagram of isogenies
\begin{eqnarray*}
\xymatrix{
  A \ar@{->}[r]^F \ar@{->}[d]_{[p]} & A^{(p)} \\
  A \ar@{<-}[ur]_V }
\end{eqnarray*}
such that the reduction
$F_k$ of $F$ fits into the following
diagram commutative
\[
\xymatrix{ A_k \ar@/^/[rrd]^{f_p}
  \ar@/_/[rdd]_{\pi_k} \ar@{->}[rd]|-{F_k} & & \\
  & A_k^{(p)} \ar@{->}[r]^{\mathrm{pr}} \ar@{->}[d]^{\pi^{(p)}_k} & A_k
  \ar@{->}[d]^{\pi_k} \\
  & \spec{k} \ar@{->}[r]^{f_p} & \spec{k} }
\]
where $f_p$ denotes the absolute $p$-Frobenius and
where $\mathrm{pr}:A_k^{(p)} \rightarrow A_k$
is a morphism which makes the square Cartesian.
In other words, the isogeny $F$ is a lift of the relative
$p$-Frobenius morphism.
The isogeny $F$ is uniquely determined by the condition
\[
\mathrm{Ker}(F)=A[p]^{\mathrm{loc}}
\]
where $A[p]^{\mathrm{loc}}$ denotes the connected component
of the zero element in the finite flat group $A[p]$.
\end{proposition}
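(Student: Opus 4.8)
The plan is to \emph{define} $A^{(p)}$ as the quotient of $A$ by the connected part of its $p$-torsion, and then to check the asserted properties, the delicate ones being reduced to the classical picture over the perfect residue field $k$. Let $g$ denote the relative dimension of $A$ over $\spec{R}$. Since $R$ is complete noetherian local, it is henselian, so the finite flat $R$-group scheme $A[p]$ possesses a connected-\'etale sequence
\[
0 \longrightarrow A[p]^{\mathrm{loc}} \longrightarrow A[p] \longrightarrow A[p]^{\mathrm{et}} \longrightarrow 0,
\]
in which $A[p]^{\mathrm{loc}}$ --- the union of those connected components of the finite scheme $A[p]$ that meet the identity component of the special fibre $A_k[p]$ --- is a finite flat closed subgroup scheme whose formation is compatible with the base change $R \to k$. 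Ordinarity of the reduction means precisely that $A_k[p]^{\mathrm{loc}}$ has order $p^g$, so $A[p]^{\mathrm{loc}}$ is finite locally free of rank $p^g$ over $R$.

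Next I would invoke the standard theorem on quotients of an abelian scheme by a finite flat closed subgroup scheme: the fppf quotient $A^{(p)} := A/A[p]^{\mathrm{loc}}$ is representable by an abelian scheme $\pi^{(p)} : A^{(p)} \to \spec{R}$, the projection $F : A \to A^{(p)}$ is a faithfully flat isogeny with $\mathrm{Ker}(F) = A[p]^{\mathrm{loc}}$, and formation of the quotient commutes with base change, so that $F_k$ is the quotient map $A_k \to A_k/A_k[p]^{\mathrm{loc}}$. Because $A[p]^{\mathrm{loc}} \subseteq A[p] = \mathrm{Ker}([p])$, the universal property of $F$ yields a unique homomorphism $V : A^{(p)} \to A$ with $V \circ F = [p]$; comparing degrees gives $\deg V = p^{2g}/p^g = p^g$, so $V$ is an isogeny and the displayed commutative triangle holds.

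There remains the identification of $F_k$ with the relative $p$-Frobenius of $A_k$ and the uniqueness statement. Over the perfect field $k$ one has $[p]_{A_k} = \mathrm{Ver} \circ \mathrm{Frob}$, with $\mathrm{Frob} : A_k \to A_k^{(p/k)}$ the relative Frobenius onto the twist $A_k^{(p/k)} = A_k \times_{\spec{k},\, f_p} \spec{k}$; hence $\mathrm{Ker}(\mathrm{Frob}) \subseteq A_k[p]$, and, being connected of order $p^g$, it is contained in $A_k[p]^{\mathrm{loc}}$, which by ordinarity also has order $p^g$, so the two coincide. As $\mathrm{Frob}$ and $F_k$ are faithfully flat homomorphisms of $k$-group schemes with the same kernel $A_k[p]^{\mathrm{loc}}$, each is the fppf cokernel of that kernel, whence a canonical $k$-isomorphism $A_k^{(p)} \xrightarrow{\sim} A_k^{(p/k)}$ carrying $F_k$ to $\mathrm{Frob}$; composing its inverse with the projection $A_k^{(p/k)} \to A_k$ defines $\mathrm{pr}$, the right-hand square is then Cartesian by the very definition of the twist, and $\mathrm{pr} \circ F_k$ equals the absolute Frobenius $f_p$ of $A_k$, so $F$ is a lift of the relative $p$-Frobenius. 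Uniqueness is then immediate: $A^{(p)}$ represents the cokernel of $A[p]^{\mathrm{loc}} \hookrightarrow A$ in fppf abelian sheaves, so any isogeny $F' : A \to A'$ of abelian $R$-schemes with $\mathrm{Ker}(F') = A[p]^{\mathrm{loc}}$ factors through $F$ via an isomorphism $A^{(p)} \xrightarrow{\sim} A'$.

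I expect the identification of $F_k$ with the relative Frobenius to demand the most care, since ``$A^{(p)}$'' carries several a priori different meanings --- the abstract quotient $A/A[p]^{\mathrm{loc}}$, the Frobenius twist $A_k \times_{f_p} \spec{k}$, and the target of an explicitly presented Frobenius lift --- and one must track all the structure morphisms so that both diagrams commute exactly; this is also where ordinarity is genuinely used, via the order count forcing $\mathrm{Ker}(\mathrm{Frob}) = A_k[p]^{\mathrm{loc}}$ (in the supersingular case $A_k[p]^{\mathrm{loc}} = A_k[p]$, and the same construction would instead return $F = [p]$, whose reduction is not the relative Frobenius). The existence of the connected-\'etale sequence over $R$ and of the quotient abelian scheme are classical and will simply be cited.
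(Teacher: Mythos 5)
Your proposal is correct and follows essentially the same route as the paper: define $A^{(p)} := A/A[p]^{\mathrm{loc}}$ using the connected-\'etale sequence and representability of quotients by finite flat subgroups, obtain $V$ from the inclusion $A[p]^{\mathrm{loc}} \subseteq A[p]$, note compatibility of $(\cdot)[p]^{\mathrm{loc}}$ with reduction over the henselian base, and identify $F_k$ with relative Frobenius by the order count $p^g$ forced by ordinarity. The only cosmetic differences are that you invoke $[p] = \mathrm{Ver}\circ\mathrm{Frob}$ and degree counting where the paper argues fiberwise and by lifting idempotents; the key lemma, the decomposition, and the use of ordinarity are identical.
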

\begin{prf}{Proof of Proposition \ref{frobeniusoverr}}
In order to prove Proposition \ref{frobeniusoverr} we need the
following classical result.
\begin{claim}
\label{quotients}
Let $A$ be an abelian scheme over a noetherian ring $R$ and
$G$ a finite flat subgroup of $A$. Then the quotient sheaf $A/G$
is representable by an abelian scheme. The quotient map
$A \rightarrow A/G$ is an isogeny.
\end{claim}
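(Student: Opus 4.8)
The plan is to realize $A/G$ first as a quotient in the category of fppf sheaves, then to show this sheaf is representable, and finally to read off from the construction that the representing object is an abelian scheme and that the quotient map is an isogeny. Since $G \subset A$ is a finite flat closed subgroup scheme, translation by $G$ defines an equivalence relation
\[
A \times_R G \;\rightrightarrows\; A, \qquad (a,g)\longmapsto a,\quad (a,g)\longmapsto a+g,
\]
both of whose structure maps are finite and locally free of degree equal to the order $|G|$ of $G$ over $\spec{R}$ (the second one is $\mathrm{pr}_1$ composed with the automorphism $(a,g)\mapsto(a+g,g)$ of $A\times_R G$). Let $B:=A/G$ be the fppf sheafification of $U\mapsto A(U)/G(U)$; it inherits a group structure and an identity section from $A$ because translations commute with the group law.

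For representability I would invoke the standard fact that the quotient of a scheme by a finite locally free equivalence relation is an algebraic space, with quotient map $q:A\to B$ finite locally free of degree $|G|$ and surjective. To promote $B$ from an algebraic space to a scheme I would then check, by faithfully flat descent along $q$, that $B\to\spec{R}$ is smooth and proper with geometrically connected fibres: flatness and local finite presentation of $B\to\spec{R}$ descend from $A\to\spec{R}$ because $q$ is faithfully flat and locally of finite presentation; properness of $B\to\spec{R}$ follows from that of $A\to\spec{R}$ because $q$ is finite and surjective; and each geometric fibre $B_{\bar s}$ is regular and connected, being a finite faithfully flat quotient of the smooth connected fibre $A_{\bar s}$. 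So $B$ is a smooth proper group algebraic space over $\spec{R}$ with geometrically connected fibres, and by Raynaud's structure theorem every such algebraic space is in fact a scheme, hence an abelian scheme. (If $A$ is known to be projective over $\spec{R}$, for instance when $R$ is normal, one can avoid algebraic spaces: out of a relatively ample line bundle $L$ on $A$ form the $G$-invariant relatively ample bundle $\bigotimes_{g\in G}[+g]^{*}L$, and apply the classical quotient construction of Grothendieck and Mumford directly.)

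Finally the quotient homomorphism $q:A\to B$ is surjective and its kernel is the inverse image of the identity section, which by construction is precisely $G$; since $G$ is finite and flat over $\spec{R}$, this makes $q$ an isogeny. The step I expect to be the genuine obstacle is the representability statement, i.e.\ passing from the fppf quotient sheaf, or from an algebraic space, to an honest scheme; this is the only point that is not formal, and it rests either on Raynaud's theorem that an abelian algebraic space is a scheme or, in the projective setting, on the existence of a $G$-invariant relatively ample line bundle together with the Grothendieck--Mumford quotient theorem. Everything else, namely the inherited group law, smoothness, properness, connectedness of the fibres, and the isogeny assertion, is a routine application of faithfully flat descent along $q:A\to A/G$.
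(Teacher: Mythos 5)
Your proposal is correct and closely parallel to the paper's argument, though it organizes the ideas in the opposite order. The paper first treats the case where $R$ is integral and normal: then $A$ is projective over $R$ (Raynaud), every $G$-orbit lies in an affine open, and the scheme-theoretic quotient exists by Raynaud's quotient theorem; the general noetherian $R$ is then handled ``by the method used in the proof of Faltings--Chai Ch.\,I, Th.\,1.9.'' Your main route goes the other way around: you construct $A/G$ in full generality as an algebraic space (quotient of a scheme by a finite locally free equivalence relation), check by faithfully flat descent along $q$ that it is a smooth proper group algebraic space with geometrically connected fibres, and then invoke the Raynaud/Faltings--Chai structure theorem that such an algebraic space is an abelian scheme. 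That structure theorem is exactly Faltings--Chai Ch.\,I, Th.\,1.9, so both arguments ultimately rest on the same hard ingredient; your parenthetical projective-case remark is essentially the paper's ``special case'' step. What your version buys is that it makes explicit the descent verifications (flatness, properness, smoothness, connectedness of fibres, kernel of $q$ equal to $G$) that the paper leaves implicit, at the cost of invoking algebraic-space machinery up front rather than reducing first to the normal base. Both proofs are sound, and the isogeny claim is handled identically.
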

\begin{prf}{Proof of Claim \ref{quotients}}
We only sketch the proof of the claim.
First assume that $R$ is integral and normal.
Then $A$ is projective (see \cite[Ch.XI,Th.1.4]{ra70b}).
Projectivity implies that every $G$-orbit lies in some
open affine.
As a consequence the
quotient $A/G$ is representable (compare \cite[$\S$5,Th.1]{ra67}).
The general case can be deduced from the above special case
by the method which is used in the proof of \cite[Ch.I,Th.1.9]{fc90}.
\end{prf}
\noindent
Now let $A$ and $R$ be as in the proposition.
It is a classical result that there exists an exact sequence
of groups
\[
0 \rightarrow A[p]^{\mathrm{loc}} \rightarrow
A[p] \rightarrow A[p]^{\mathrm{et}} \rightarrow 0
\]
where $A[p]^{\mathrm{loc}}$ is the connected component of the
zero element in $A[p]$ and $A[p]^{\mathrm{et}}$ is the
maximal \'{e}tale quotient of $A[p]$
by $A[p]^{\mathrm{loc}}$ (for a proof see \cite[$\S$3.7]{ta97}).
By Claim \ref{quotients} there exists an isogeny
\[
F:A \rightarrow A^{(p)} \stackrel{\mathrm{def}}{=} A/A[p]^{\mathrm{loc}}.
\]
Because of  the relation
$\mbox{Ker}(F)=A[p]^{\mathrm{loc}} \subseteq A[p]$
there exists a commutative diagram
\begin{eqnarray*}
\xymatrix{
  A \ar@{->}[r]^F \ar@{->}[d]_{[p]} & A^{(p)} \\
  A \ar@{<-}[ur]_V }
\end{eqnarray*}
One checks fiberwise that the morphism $V$ is an isogeny.
We claim that
$A_k[p]^{\rm{loc}} = \big( A[p]^{\rm{loc}} \big)_k$.
In the following we prove our claim.
Let $A[p]^{\rm{loc}}=\spec{C}$.
The finite $R$-algebra $C$ is connected, i.e.
it has only the trivial idempotents $0$ and $1$.
The same is true for $C \otimes_R k$, since $R$ is henselian and
thus one can lift the idempotents of $C \otimes_R k$ to $C$
(compare \cite{ra70} Ch. I, $\S$1).
This proves our claim.
\newline\indent
The kernel $K$ of the relative
Frobenius morphism of $A_k$
has no non-zero points over the algebraic closure of $k$
and hence is connected. It follows that
$K \subseteq A_k[p]^{\rm{loc}}$.
The order of $K$ equals $p^g$
where $g$ is the relative dimension of $A$ over $R$.
By comparing ranks we conclude that the equality
$K = A_k[p]^{\rm{loc}}$ holds.
This proves the proposition.
\end{prf}
\noindent
By successively dividing out the connected component
of the $p$-torsion one obtains
a sequence of abelian schemes with bonding morphisms
\[
A \stackrel{F}{\rightarrow} A^{(p)} \stackrel{F}{\rightarrow}
A^{(p^2)}
\rightarrow \ldots
\]
In Section \ref{explicit} we show how this sequence can be made
explicit in the case of elliptic curves.
\begin{theorem}
\label{teichmueller}
Let $B$ be an abelian scheme over $R$ and let $i \geq 1$
be an integer.
If
\[
A \cong B \bmod \mathfrak{m}_R^i
\]
then
\[
A^{(p)} \cong B^{(p)} \bmod \mathfrak{m}_R^{i+1}.
\]
\end{theorem}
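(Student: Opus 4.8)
\noindent
The plan is to reduce the statement to Serre--Tate local moduli theory, together with the elementary observation that, because $p \in \mathfrak{m}_R$, raising to the $p$-th power contracts the top graded piece of the $\mathfrak{m}_R$-adic filtration. Put $C = R/\mathfrak{m}_R^{i+1}$ and $\bar{C} = R/\mathfrak{m}_R^i$, so that $\bar{C} = C/I$ with $I = \mathfrak{m}_R^i/\mathfrak{m}_R^{i+1}$; here $I^2 = 0$ because $i \geq 1$, and $pI = 0$ because $p \in \mathfrak{m}_R$. Writing $A_C = A \otimes_R C$, etc., the formation of $A[p]^{\mathrm{loc}}$ and of the quotient $A^{(p)} = A/A[p]^{\mathrm{loc}}$ commutes with base change along $R \to C$ and along $C \to \bar{C}$ (this is the base change assertion already used in the proof of Proposition \ref{frobeniusoverr}, together with Claim \ref{quotients}), and an isomorphism of abelian schemes carries connected $p$-torsion onto connected $p$-torsion. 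So the hypothesis gives an isomorphism $A_{\bar{C}}^{(p)} \cong B_{\bar{C}}^{(p)}$, and it remains to lift it to an isomorphism $A_C^{(p)} \cong B_C^{(p)}$. Fixing an isomorphism $A_{\bar{C}} \cong B_{\bar{C}}$, I regard $A_C$ and $B_C$ as two deformations over $C$ of one ordinary abelian scheme over $\bar{C}$, and $A_C^{(p)}$, $B_C^{(p)}$ as two deformations over $C$ of the common object $A_{\bar{C}}^{(p)}$.

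Next I invoke Serre--Tate theory: such a deformation over $C$ is determined, up to the isomorphism prescribed over $\bar{C}$, by a single datum $q(A)$, its Serre--Tate parameter, lying in a group of the shape $\mathrm{Hom}_{\xz_p}\!\big(T_p A^{\mathrm{et}} \otimes T_p \widehat{A}^{\vee},\, 1 + \mathfrak{m}_C\big)$, and read off from the extension
\[
0 \longrightarrow \widehat{A} \longrightarrow A[p^\infty] \longrightarrow A^{\mathrm{et}} \longrightarrow 0
\]
of the \'{e}tale part $A^{\mathrm{et}}$ by the multiplicative-type formal part $\widehat{A}$, both of which are rigid so that only this extension class moves. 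The key point is to express $q(A^{(p)})$ through $q(A)$. For an ordinary abelian scheme one has $A[p]^{\mathrm{loc}} = \widehat{A}[p] = \mathrm{Ker}\big([p] \colon \widehat{A} \to \widehat{A}\big)$, hence $A^{(p)}[p^\infty] = A[p^\infty]/\widehat{A}[p]$, and a direct verification shows that this extension is the push-out of $A[p^\infty]$ along the isogeny $[p]\colon \widehat{A} \to \widehat{A}$; since $[p]$ is multiplication by $p$ in the group law of $\widehat{A}$, push-out along it multiplies the extension class by $p$, i.e.\ $q(A^{(p)}) = q(A)^p$.

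It then remains to conclude. Since $A_{\bar{C}} \cong B_{\bar{C}}$, the parameters of $A_C$ and $B_C$ agree modulo $I$; writing $q(A_C) = q(B_C)\cdot(1+m)$ with $m$ taking values in $I$, the same identifications of Tate modules used for $A$ and for $B$ give
\[
q\big(A_C^{(p)}\big) = q(A_C)^p = q(B_C)^p\,(1+m)^p = q(B_C)^p = q\big(B_C^{(p)}\big),
\]
because $(1+m)^p = 1$: the linear term of the binomial expansion lies in $pI = 0$ and every higher term lies in $I^2 = 0$. By the uniqueness in Serre--Tate theory this forces $A_C^{(p)} \cong B_C^{(p)}$, which is the assertion. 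I expect the hard part to be not this last congruence but the bookkeeping of the middle step: one needs the Serre--Tate description, and the transformation law for the Frobenius lift, \emph{relative to the Artinian base $\bar{C}$} rather than over the residue field $k$. An alternative that sidesteps this is to run the argument through Grothendieck--Messing crystalline deformation theory, where $A^{(p)}$ corresponds to a lift of the Hodge filtration that is independent of the chosen lift of the object from $\bar{C}$ to $C$, precisely because the relevant Frobenius is divisible by $p$ and $pI = 0$.
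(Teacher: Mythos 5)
Your proof is correct and follows essentially the same route as the paper's: both reduce to Serre--Tate deformation theory, invoke that the map $A \mapsto A^{(p)}$ is $p$-th powering on the Serre--Tate parameter (the paper states this as Fact \ref{oper} without proof, while you sketch a justification via the push-out description of the extension $0 \to \widehat{A} \to A[p^\infty] \to A^{\mathrm{et}} \to 0$), and conclude with the same elementary computation that $p$-th powering kills the top graded piece of the $\mathfrak{m}_R$-adic filtration since $p \in \mathfrak{m}_R$. The only real difference is cosmetic: you work directly over the Artinian quotients $R/\mathfrak{m}_R^{i+1} \twoheadrightarrow R/\mathfrak{m}_R^{i}$ with a square-zero ideal, whereas the paper phrases the same estimate as $(1+m)^p \in 1 + \mathfrak{m}^{i+1}$ for $m \in \mathfrak{m}^i$.
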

\begin{prf}{Proof of Theorem \ref{teichmueller}}
The following proof of the theorem was communicated
to the author by Moonen.
We only sketch his proof.
For a more detailed version of the proof see
\cite[Ch.2]{ca04}.
\newline\indent
In the following we give a proof in the case
where $k$ is algebraically closed.
In the following let $S$ denote a complete noetherian
local ring with algebraically closed residue class field
$k$ of characteristic $p>0$.
Let $W(k)$ denote the Witt vectors with values in $k$.
Note that there exists a canonical morphism $W(k) \rightarrow S$.
Assume that we are given an ordinary abelian variety
$A_0$ over $k$. Let $\mathrm{Def}(A_0)$ denote the functor
which associates to every local ring $S$ as above
the set of isomorphism classes of pairs $(A,\varphi)$,
where $A$ is a formal abelian scheme over $S$
and $\varphi$ is an isomorphism $A_k \stackrel{\sim}{\rightarrow}
A_0$.
\begin{fact}
\label{ldef}
The functor $\mathrm{Def}(A_0)$ is representable
by a split formal torus of relative dimension $g^2$ over $W(k)$,
where $g= \mathrm{dim}(A_0)$. The canonical lift of $A_0$
corresponds to the unit element of the formal torus.
\end{fact}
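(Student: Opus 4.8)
The plan is to reduce the statement to classical Serre--Tate deformation theory. The first and only deep step is the Serre--Tate theorem (see e.g. \cite{ca04}): for $S$ as above, the functor $A \mapsto A[p^\infty]$ induces an equivalence between $\mathrm{Def}(A_0)$ and the deformation functor $\mathrm{Def}(G_0)$ of the $p$-divisible group $G_0 := A_0[p^\infty]$, and this equivalence is moreover fully faithful on morphisms, so that a homomorphism of abelian schemes lifts if and only if the induced homomorphism of $p$-divisible groups lifts. I would invoke this verbatim; it reduces the problem to a purely group-scheme-theoretic one.

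Next I would use that, since $A_0$ is ordinary over the algebraically closed field $k$, its $p$-divisible group carries a functorial connected--\'{e}tale sequence $0 \to G_0^\circ \to G_0 \to G_0^{\mathrm{et}} \to 0$ (cf. \cite{ta97}) in which $G_0^{\mathrm{et}} \cong (\xq_p/\xz_p)^g$ is \'{e}tale and $G_0^\circ \cong \mu_{p^\infty}^g$ is of multiplicative type, each of height $g$. Both \'{e}tale $p$-divisible groups and, via Cartier duality, those of multiplicative type lift uniquely along nilpotent thickenings; let $G^{\mathrm{et}}$ and $G^\circ$ be these unique lifts over a given $S$. Since $\mathrm{Hom}(G_0^{\mathrm{et}}, G_0^\circ) = 0$, any deformation $G/S$ of $G_0$ is canonically an extension of $G^{\mathrm{et}}$ by $G^\circ$, determined up to unique isomorphism by its extension class; hence $\mathrm{Def}(A_0)(S) \cong \mathrm{Ext}^1_S(G^{\mathrm{et}}, G^\circ)$ functorially in $S$, with the split extension $G^\circ \times G^{\mathrm{et}}$ corresponding to the zero class.

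It remains to compute this $\mathrm{Ext}$-group and to locate the canonical lift. Writing $G^{\mathrm{et}} = V_p G^{\mathrm{et}} / T_p G^{\mathrm{et}}$ with $T_p G^{\mathrm{et}} \cong \xz_p^g$ (as $k$ is algebraically closed), and using that $\mathrm{Hom}(V_p G^{\mathrm{et}}, G^\circ) = \mathrm{Ext}^1(V_p G^{\mathrm{et}}, G^\circ) = 0$ by divisibility, one obtains $\mathrm{Ext}^1_S(G^{\mathrm{et}}, G^\circ) \cong \mathrm{Hom}_{\xz_p}(T_p G^{\mathrm{et}}, \widehat{G^\circ}(S)) \cong \widehat{G^\circ}(S)^g$, where $\widehat{G^\circ}$ is the formal group of $G^\circ$; since $G^\circ \cong \mu_{p^\infty}^g$ we have $\widehat{G^\circ} \cong \widehat{\xg}_m^g$, so $\mathrm{Def}(A_0)(S) \cong \widehat{\xg}_m^{g^2}(S) = (1 + \mathfrak{m}_S)^{g^2}$, functorially in $S$. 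The Baer sum on $\mathrm{Ext}^1$ matches the multiplicative formal group law, so $\mathrm{Def}(A_0)$ is represented by the completion along the identity of the split torus $\xg_{m,W(k)}^{g^2}$, that is, by a split formal torus of relative dimension $g^2$ over $W(k)$. For the final assertion, let $A^*$ over $W(k)$ be the deformation attached to the unit section, i.e. to the split extension; then $\mathrm{Hom}(G^\circ, G^{\mathrm{et}}) = \mathrm{Hom}(G^{\mathrm{et}}, G^\circ) = 0$ together with the rigidity above give $\mathrm{End}(A^*[p^\infty]) = \mathrm{End}(G^\circ) \times \mathrm{End}(G^{\mathrm{et}}) = \mathrm{End}(G_0^\circ) \times \mathrm{End}(G_0^{\mathrm{et}}) = \mathrm{End}(G_0)$, and combining this with the full faithfulness from the Serre--Tate equivalence and the fact that every endomorphism of $A_0$ respects the connected--\'{e}tale sequence shows that the reduction map $\mathrm{End}(A^*) \to \mathrm{End}(A_0)$ is bijective. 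Hence $A^*$ is the canonical lift recalled in the introduction, and it is the unit element of the formal torus.

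The main obstacle is the Serre--Tate equivalence itself, with its full faithfulness: this is the substantial input, resting ultimately on Grothendieck--Messing crystalline deformation theory. A secondary point that requires genuine care is the $\mathrm{Ext}^1$-computation --- one must work in the correct category ($p$-divisible groups, equivalently fppf abelian sheaves over $S$), so that the formal group $\widehat{G^\circ}$ rather than the bare sheaf $G^\circ$ occurs, and one must justify the divisibility vanishing statements as well as the rigidity of multiplicative-type groups via Cartier duality. Everything beyond these points is formal bookkeeping.
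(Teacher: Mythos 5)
The paper does not actually prove Fact~\ref{ldef}: it is stated without argument as a classical result (with a pointer to \cite{ca04} for details), so there is no ``paper's own proof'' to compare against. Your argument is a correct account of the standard route --- essentially Katz's ``Serre--Tate local moduli.'' You reduce via the Serre--Tate theorem to deformations of $G_0=A_0[p^\infty]$, use the connected--\'{e}tale sequence together with rigidity of \'{e}tale and multiplicative $p$-divisible groups and the vanishing $\mathrm{Hom}(G_0^{\mathrm{et}},G_0^\circ)=0$ to identify $\mathrm{Def}(A_0)(S)$ with $\mathrm{Ext}^1_S(G^{\mathrm{et}},G^\circ)$, compute the latter as $\widehat{\xg}_m^{\,g^2}(S)$, and correctly identify the split extension with the canonical lift via bijectivity of the reduction map on endomorphisms. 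All the load-bearing steps are present and correctly placed.

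Two small cautions that would need care in a fully written-out version. First, the passage from $\mathrm{Ext}^1(G^{\mathrm{et}},G^\circ)$ to $\mathrm{Hom}_{\xz_p}(T_pG^{\mathrm{et}},\widehat{G^\circ}(S))$ requires specifying in which category of sheaves the $\mathrm{Ext}$ is taken and why the \emph{formal} group $\widehat{G^\circ}$ rather than the $p$-divisible group $G^\circ$ appears; you flag this yourself, but the long exact sequence from $0\to T_p\to V_p\to G^{\mathrm{et}}\to 0$ produces $\mathrm{Hom}(T_p,G^\circ)$ on the nose, and the identification of this with $\widehat{G^\circ}(S)^g$ (rather than $G^\circ(S)^g$) is where the pro-structure and the nilpotence of $\mathfrak{m}_S$ enter. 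Second, for the characterization of the canonical lift you establish that the split extension over $W(k)$ has bijective reduction on endomorphisms; since the paper takes uniqueness of such a lift as given, this suffices, but a self-contained argument would also want to observe that a nontrivial Serre--Tate parameter obstructs lifting some endomorphism of $A_0$. Neither point is a gap in the plan --- just places where the bookkeeping is genuinely delicate.
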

\noindent
Recall that a split formal torus of relative
dimension $1$ over $W(k)$, denoted by $\hat{\xg}_{m,W(k)}$,
is defined as
the completion of the multiplicative group $\xg_{m,W(k)}$
at its unit section.
In higher dimension, we call a formal group over $W(k)$ a formal torus
if it is isomorphic to a
product of copies of the formal group $\hat{\xg}_{m,W(k)}$.
\newline\indent
Now let $F_0:A_0 \rightarrow A_0^{(p)}$ denote the
relative $p$-Frobenius morphism.
By Proposition \ref{frobeniusoverr} there exists
a canonical induced morphism
\begin{eqnarray}
\label{defmap}
\mathrm{Def}(A_0) \rightarrow \mathrm{Def}\big( A_0^{(p)} \big)
\end{eqnarray}
which maps $(A,\varphi)$ to $\big( A^{(p)},\varphi^{(p)} \big)$.
Assume that we have chosen an isomorphism
$\mathrm{Def}(A_0) \stackrel{\sim}{\rightarrow}
\hat{\xg}_{m,W(k)}^{g^2}$.
\begin{fact}
\label{oper}
There exists precisely one structure of a formal torus
on $\mathrm{Def} \big( A_0^{(p)} \big)$,
i.e. an isomorphism $\mathrm{Def} \big( A_0^{(p)} \big)
\stackrel{\sim}{\rightarrow}
\hat{\xg}_{m,W(k)}^{g^2}$,
such that the morphism
{\rm (\ref{defmap})} is a homomorphism of formal groups.
With respect to this choice of a group structure the morphism
{\rm (\ref{defmap})} is given by the $p$-th powering morphism.
\end{fact}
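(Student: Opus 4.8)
The plan is to prove Fact \ref{oper} using the canonical Serre--Tate coordinates together with the functoriality of the Serre--Tate pairing under isogenies. By Serre--Tate theory — which underlies Fact \ref{ldef} — there is, for the algebraically closed field $k$ and the ordinary abelian variety $A_0$, a canonical identification of $\mathrm{Def}(A_0)$ with the formal torus
\[
\mathrm{Hom}_{\xz_p}\!\big( T_pA_0^{\mathrm{et}} \otimes_{\xz_p} T_p(A_0^\vee)^{\mathrm{et}},\, \hat{\xg}_{m,W(k)} \big),
\]
where $T_p(-)^{\mathrm{et}}$ denotes the Tate module of the maximal étale quotient of the $p$-divisible group, and the identification sends a deformation $A/S$ to its Serre--Tate coordinate $q_A$, a $\xz_p$-bilinear pairing on $T_pA_0^{\mathrm{et}} \times T_p(A_0^\vee)^{\mathrm{et}}$ with values in $\hat{\xg}_m(S)$. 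Since $A_0^{(p)}$ is again ordinary, the same description equips $\mathrm{Def}(A_0^{(p)})$ with a canonical formal torus structure; I would first show that this is the structure asserted in the statement, and then deduce uniqueness.

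The heart of the matter is the adjunction formula for Serre--Tate coordinates: an isogeny $f\map{A_0}{B_0}$ lifting to $f\map{A}{B}$ over $S$ satisfies $q_B(f(P),Q) = q_A(P, f^\vee(Q))$ for $P \in T_pA_0^{\mathrm{et}}$ and $Q \in T_p(B_0^\vee)^{\mathrm{et}}$, where $f^\vee$ is the dual isogeny. I would apply this to $f = F_0$, the relative $p$-Frobenius, which by Proposition \ref{frobeniusoverr} lifts canonically to the isogeny $F\map{A}{A^{(p)}}$ appearing in (\ref{defmap}), using the standard identity $(F_0)^\vee = V_{A_0^\vee}$ under $(A_0^{(p)})^\vee = (A_0^\vee)^{(p)}$, with $V$ the Verschiebung. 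Because $\mathrm{Ker}(F_0) = A_0[p]^{\mathrm{loc}}$ is connected, $F_0$ induces an isomorphism $\alpha$ on étale Tate modules; and from $V_{A_0^\vee} \circ F_{A_0^\vee} = [p]$, together with the analogous statement for $A_0^\vee$, the Verschiebung $V_{A_0^\vee}$ acts on étale Tate modules as $[p]$ composed with an isomorphism $\beta^{-1}$. Substituting into the adjunction formula gives $q_{A^{(p)}}(\alpha(P),\beta(Q)) = q_A(P,Q)^p$, so after transporting the chosen coordinates on $\mathrm{Def}(A_0)$ to $\mathrm{Def}(A_0^{(p)})$ via the canonical isomorphisms $\alpha$ and $\beta$, the map (\ref{defmap}) becomes the $p$-th powering endomorphism of $\hat{\xg}_{m,W(k)}^{g^2}$; in particular it is a homomorphism of formal groups.

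For uniqueness I would use that the $p$-th powering map on $\hat{\xg}_{m,W(k)}^{g^2}$ is a finite locally free isogeny, so its comorphism is injective and exhibits the source ring as a faithfully flat (indeed free) module over the target ring; hence the comorphism tensored with itself over $W(k)$ is again injective, and the commutative diagram expressing that (\ref{defmap}) is a homomorphism then determines the comultiplication on $\mathrm{Def}(A_0^{(p)})$ uniquely. I expect the genuine difficulties to be purely organizational: getting the variances right in the adjunction formula and in $(F_0)^\vee = V_{A_0^\vee}$, and keeping the étale and toric parts of the $p$-divisible groups apart — once the Serre--Tate formalism is in place, the only actual computation is the observation that relative Frobenius is an isomorphism on the étale part while Verschiebung is $[p]$ times an isomorphism there. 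The restriction to algebraically closed $k$ is harmless, matching the rest of this proof; the general case follows by descent along $W(k) \to W(\bar k)$.
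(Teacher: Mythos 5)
Your proof is correct, and it is essentially the only natural proof. The paper itself does not prove Fact \ref{oper}: it is stated without argument as part of a sketch attributed to Moonen, with the details deferred to \cite[Ch.2]{ca04}, where the argument runs precisely through Serre--Tate canonical coordinates as you do. So rather than a ``different route,'' your proposal is a reconstruction of the argument the paper implicitly relies on.

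The substantive content is all in place and checks out. The adjunction formula is invoked with the correct variance: for a lift of $f_0 \map{A_0}{B_0}$ one needs $q(A; \alpha, f_0^\vee(\beta^\vee)) = q(B; f_0(\alpha), \beta^\vee)$, which is Katz's criterion; applying it to $f_0 = F_0$, using $(F_0)^\vee = V_{A_0^\vee}$ under the identification $(A_0^{(p)})^\vee \cong (A_0^\vee)^{(p)}$, and observing that $F_0$ (resp.\ $V_{A_0^\vee}$) is an isomorphism (resp.\ $p$ times an isomorphism) on étale Tate modules because $\ker F_0$ is connected of order $p^g$, gives exactly $q_{A^{(p)}}(\alpha(P),\beta(Q)) = q_A(P,Q)^p$. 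Your uniqueness argument is also sound: the comorphism of the $p$-th power map is finite free, hence so is its square over $W(k)$, and injectivity then pins down the comultiplication from the homomorphism condition; one should just note explicitly that the comorphism of (\ref{defmap}) as a map of formal schemes is independent of any putative group structure on the target, so the flatness used is not circular. Two trivialities worth spelling out if you write this up: $A_0^{(p)}$ is again ordinary (so Serre--Tate coordinates exist on the target), and Fact \ref{oper} is stated after an arbitrary choice of isomorphism $\mathrm{Def}(A_0) \cong \hat{\xg}_{m,W(k)}^{g^2}$, not necessarily the Serre--Tate one, but any two such isomorphisms differ by a $\xz_p$-linear automorphism of the torus, which commutes with $p$-th powering, so the conclusion is unaffected.
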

\noindent
The group functor $\hat{\xg}_{m,W(k)}$
associates to every local ring $S$ as above
the multiplicative subgroup $1 + \mathfrak{m}$ of $S^*$ where
$\mathfrak{m}$ denotes the maximal ideal of $S$. 
The theorem now follows from the following computation
\[
(1+ m)^p= \sum_{i=0}^p {p \choose i} \cdot m^{p-i}
\in 1+m^p+pmR \subseteq
1+\mathfrak{m}^{i+1}, \quad \forall m \in \mathfrak{m^i}.
\]
\end{prf}
\noindent
Let $A^*$ denote the canonical lift of $A_k$.
We note that if $k$ is a finite field with $q=\#k$ elements
then for all $n \geq 0$ there exists an isomorphism
$A_k \cong A_k^{(q^n)}$.
\begin{corollary}
\label{conv}
Let $q=\# k < \infty$.
One has
\[
\lim_{n \rightarrow \infty} A^{(q^n)} = A^*,
\]
which means that
\[
(\forall n \geq N)
\quad  A^{(q^n)} \cong A^* \bmod \mathfrak{m}^{nd+1}
\]
where $d=\log_p(q)$.
\end{corollary}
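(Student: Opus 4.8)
The plan is to transport the congruence between $A$ and its canonical lift $A^*$ up the Frobenius tower, gaining one unit of $\mathfrak{m}$-adic precision at each application of Theorem \ref{teichmueller}, and to exploit that iterating the Frobenius lift $d$ times returns the canonical lift to itself. The asserted convergence is, unwound, precisely the family of congruences $A^{(q^n)} \cong A^* \bmod \mathfrak{m}^{nd+1}$, and it suffices to establish these for all $n \geq 0$.

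First I would record two preliminary observations. Since the special fibre of $A^*$ is by definition $A_k$, one has $A \cong A^* \bmod \mathfrak{m}$. Secondly, I claim that the Frobenius-lift operation of Proposition \ref{frobeniusoverr} carries the canonical lift to the canonical lift, i.e.\ $(A^*)^{(p)}$ is the canonical lift of $A_k^{(p)}$. When $k$ is algebraically closed this follows from Facts \ref{ldef} and \ref{oper}: choosing the torus structures so that the induced map (\ref{defmap}) becomes the $p$-th powering homomorphism of $\hat{\xg}_{m,W(k)}^{g^2}$, the unit section is fixed, and by Fact \ref{ldef} the unit section corresponds to the canonical lift on both sides, so $[A^*]$ is carried to $[(A^*)^{(p)}]$, which is therefore the unit of $\mathrm{Def}(A_k^{(p)})$. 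For finite $k$ one reduces to the algebraically closed case by a standard descent along $W(k) \to W(\bar{k})$, using that the Frobenius lift commutes with base change and that the canonical lift over $R$ is unique. Iterating this $d = \log_p(q)$ times and using that $A_k^{(q)} \cong A_k$ — because the absolute $q$-Frobenius of $\Spec{k}$ is the identity, as recalled before the corollary — we obtain $(A^*)^{(q)} \cong A^*$.

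Next I would run an induction on $n$. The case $n = 0$ is the congruence $A \cong A^* \bmod \mathfrak{m}$ recorded above. For the inductive step, suppose $A^{(q^n)} \cong A^* \bmod \mathfrak{m}^{nd+1}$ and apply Theorem \ref{teichmueller} $d$ times in succession. Writing $q^n p^j = p^{\,nd+j}$, after the $j$-th application one has
\[
A^{(q^n p^j)} \cong (A^*)^{(p^j)} \bmod \mathfrak{m}^{\,nd+1+j}
\]
(each $(A^*)^{(p^j)}$ is an abelian scheme over $\Spec{R}$ by Proposition \ref{frobeniusoverr}, so Theorem \ref{teichmueller} applies at every step). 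Taking $j = d$ and invoking the second preliminary observation, $(A^*)^{(q)} \cong A^*$, gives
\[
A^{(q^{n+1})} \cong (A^*)^{(q)} \cong A^* \bmod \mathfrak{m}^{\,(n+1)d+1},
\]
which completes the induction. Hence $A^{(q^n)}$ converges $\mathfrak{m}$-adically to $A^*$, as claimed.

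The only genuinely delicate point is the second preliminary observation: it really requires Fact \ref{oper}, not merely Fact \ref{ldef}, because a priori the torus structure on $\mathrm{Def}(A_k^{(p)})$ coming from the isomorphism chosen there need not agree with the one transported from $\mathrm{Def}(A_k)$ through (\ref{defmap}) — only their unit sections, hence their canonical lifts, are forced to coincide. Once this is in place, the remainder is the bookkeeping of $\mathfrak{m}$-adic precision, and the passage from $p$-powers to $q$-powers is handled simply by grouping the steps of the Frobenius tower in blocks of length $d$.
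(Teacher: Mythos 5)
Your argument follows the same route as the paper's: Theorem \ref{teichmueller} supplies the precision gain one step at a time, and the identity $(A^*)^{(q)} \cong A^*$ closes each block of $d$ steps, so the corollary drops out by induction. The paper merely asserts $(A^*)^{(q)} \cong A^*$; your elaboration via Facts \ref{ldef} and \ref{oper} is a reasonable fill-in, though the clause ``only their unit sections \ldots are forced to coincide'' is itself the crux and deserves a word --- two torus structures on the same formal scheme need not share a unit in general; here they do because the Serre--Tate structure of Fact \ref{ldef} already renders (\ref{defmap}) a homomorphism, so by the uniqueness in Fact \ref{oper} it \emph{is} the transported structure, and the two units agree trivially.
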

\begin{proof}
The claim is an immediate consequence of Theorem \ref{teichmueller}. 
We note that one has
\[
\big( A^* \big)^{(q)} \cong A^*.
\]
\end{proof}

\section{Explicit formulae for Frobenius lifts}
\label{explicit}

In this section we show how the Frobenius lift
whose existence is proven in Proposition \ref{frobeniusoverr}
can be made explicit.
The special case of even residue class field
characteristic is discussed in Section \ref{agm}.
\newline\indent
Let $R$ denote a complete discrete valuation ring with perfect residue field
$k$ of characteristic $p>2$ and $K$ its field of fractions.
Let $\pi$ be a uniformizer of $R$, let $v:R \rightarrow \xz$
be an exponential valuation such that $v(\pi)=1$ and
let $\bar{K}$ be an algebraic closure of $K$.
We assume $K$ to have characteristic $0$.
\newline\indent
Consider an elliptic curve $E$ over $K$ which has good reduction.
We assume that $\#E[2](K)=4$.
The curve $E$ admits a model
\begin{eqnarray}
\label{weiermin}
y^2=x(x-a)(x-b)
\end{eqnarray}
where $a,b \in R^*$ and $a \not\equiv b \bmod \pi$.
Let the reduction $\bar{E}$ of $E$
be given by
\[
y^2=x(x-\bar{a})(x-\bar{b}),
\]
where the coefficients
\[
\bar{a} \equiv a \bmod \pi \quad \mbox{and} \quad \bar{b} \equiv b \bmod \pi
\]
are in $k$.
Let $\bar{E}^{(p)}$ over $k$ be the elliptic curve with equation
\[
y^2=x(x-\bar{a}^p)(x-\bar{b}^p).
\]
The isogeny $\bar{F}:\bar{E} \rightarrow \bar{E}^{(p)}$
over $k$, which is defined by $(x,y) \mapsto (x^p,y^p)$ is called the
relative Frobenius morphism.
In the following we will discuss necessary conditions for the
existence of a lift
of the relative Frobenius morphism.
\newline\indent
Assume that we are given a subgroup $G \leq E[p](\bar{K})$ of order
$p$, which is defined over $K$, i.e. $\sigma(G) = G$ for all
$\sigma \in \mathrm{Gal}(\bar{K}/K)$.
Let $S \subseteq G$ such that $S \cap -S = \emptyset$
and $G=S \cup -S \cup \{ 0_E \}$, where $0_E$ denotes the zero section
of $E$.
We set
\[
P_1=(0,0), \quad P_2=(a,0) \quad \mbox{and} \quad P_3=(b,0). 
\]
Let $x(Q)$ denote the $x$-coordinate of a point $0_E \not= Q \in E(\bar{K})$.
We define
\[
h(x)=\prod_{Q \in S} \left( \frac{x}{x(Q)}-1 \right)
\]
and
\[
g_i(x)= \prod_{Q \in S} \big( x-x(Q+P_i) \big), \quad i=1,2,3. 
\]
Note that $x(Q) \not=0$ for $Q \in S$, since $p>2$.
Since $G$ is defined over $K$, the polynomials $h(x)$ and
$g_i(x)$ are elements of $K[x]$, where $i=1,2,3$.
\begin{theorem}
\label{froblift}
Assume that we are given a subgroup $G \subseteq E[p](\bar{K})$
with $\#G=p$, which is defined over $K$.
Suppose $G$ is contained in the kernel of reduction.
Let $E^{(p)}$ be defined by
\[
y^2=x \big( x-a^{(p)} \big) \big( x-b^{(p)} \big) ,
\]
where
\[
a^{(p)}=a^p \cdot \left( \frac{h(b)}{h(a)}
\right)^2
\quad \mbox{and} \quad
b^{(p)}=b^p \cdot \left( \frac{h(a)}{h(b)}
\right)^2.
\]
Then $E^{(p)}$ is a non-singular elliptic curve, and
there exists an isogeny
\[
F:E \rightarrow E^{(p)}
\]
given by
\[
(x,y) \mapsto \left( \frac{x \cdot g_1(x)^2}{h(x)^2},
\frac{\prod_{i=1}^3 g_i(x)}{h(x)^3} \cdot y \right)
\]
which has kernel $G$.
We have $a^{(p)},b^{(p)} \in R$ and $h(x), g_i(x) \in R[x]$,
where $i=1,2,3$.
The curve $E^{(p)}$ reduces to $\bar{E}^{(p)}$
and the isogeny $F$ lifts the relative Frobenius $\bar{F}$.
Also we have
\[
F^* \left( \frac{dx}{y} \right) = \mathrm{lead}(h) \cdot \frac{dx}{y},
\]
where $\mathrm{lead}(h)$ denotes the leading coefficient of
the polynomial $h(x)$.
\end{theorem}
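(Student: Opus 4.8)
The plan is to build $F$ explicitly via Vélu-type formulae on the quotient $E/G$ and then identify the resulting Weierstrass model with the claimed equation. First I would recall that the quotient $E/G$ exists as an elliptic curve (by Claim \ref{quotients}), and that the classical Vélu formulae give an isogeny $F\colon E\to E/G$ with kernel $G$. The $x$-coordinate of $F$ is, up to an additive constant, $\sum_{Q\in G}\big(x(\cdot+Q)-x(Q)\big)$, which in the $2$-torsion-split setting collapses to a rational function whose numerator and denominator are the squares of polynomials supported on the $x$-coordinates of $S$ and its translates; the bookkeeping $S\cap-S=\emptyset$, $G=S\cup-S\cup\{0_E\}$ is exactly what lets us write everything in terms of $h(x)$ and the $g_i(x)$ without square roots. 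So the first concrete step is: verify directly that the map $(x,y)\mapsto\big(xg_1(x)^2/h(x)^2,\ \prod_i g_i(x)/h(x)^3\cdot y\big)$ sends the curve \eqref{weiermin} into \emph{some} cubic, and that this cubic is the stated $y^2=x(x-a^{(p)})(x-b^{(p)})$. The key observation making this clean is that the three points $P_1,P_2,P_3$ of $E[2]$ map under $F$ to the three $2$-torsion points of $E^{(p)}$ with $x$-coordinates $0,a^{(p)},b^{(p)}$, so it suffices to compute $F(P_2)$ and $F(P_3)$: plugging $x=a$ into $xg_1(x)^2/h(x)^2$ and simplifying (using that $g_1$ evaluated at the roots of $h$ relates to $h(a)$, a Vélu identity) should yield precisely $a^p(h(b)/h(a))^2$, and symmetrically for $b$. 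Non-singularity of $E^{(p)}$ then amounts to checking $a^{(p)},b^{(p)}$ and $a^{(p)}-b^{(p)}$ are nonzero, which follows from $a,b,a-b$ being units together with the fact that $h$ is a unit at the relevant places.

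Second, for the integrality claims I would use the hypothesis that $G$ lies in the kernel of reduction: the $x$-coordinates $x(Q)$ for $Q\in S$ are then \emph{poles} of the reduction map, i.e. $1/x(Q)\in\mathfrak m_R$ (points in the formal group have $v(x)<0$), so $h(x)=\prod_{Q\in S}(x/x(Q)-1)\in R[x]$ with $h\equiv(-1)^{\#S}\bmod\pi$, in particular $h$ is a unit in $R$ when evaluated at any unit or at $0$. This immediately gives $a^{(p)},b^{(p)}\in R^*$ and shows the reduced curve is $y^2=x(x-\bar a^p)(x-\bar b^p)=\bar E^{(p)}$. For $g_i(x)\in R[x]$ one argues that $x(Q+P_i)$ for $Q\in S$ are, modulo $\mathfrak m_R$, a permutation of $\{x(P_i)\}$-translates that stay in $R$ — more precisely the reduction of $F$ is the relative Frobenius, which has polynomial $x$-coordinate, forcing $g_i(x)/h(x)^2$ to reduce to $x^{p-1}$ up to constants, and since $h$ reduces to a constant, $g_i$ must have coefficients in $R$. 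The reduction statement $\bar F=\bar F^{(p)}$ (the relative $p$-Frobenius) then follows because $\bar F$ is an isogeny of degree $p$ with kernel $\bar G=\ker(\bar E[p]^{\mathrm{loc}})$ — this is exactly the uniqueness clause of Proposition \ref{frobeniusoverr}, applied with $\mathrm{Ker}(F)=G$; I should remark that $G$ being in the kernel of reduction forces $\bar G$ to be the connected part of $\bar E[p]$ (an étale subgroup would lift to points not in the formal group), so $\ker(F)=E[p]^{\mathrm{loc}}$ and $F$ is \emph{the} canonical lift.

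Finally, the differential formula $F^*(dx/y)=\mathrm{lead}(h)\cdot dx/y$ is a direct computation from the explicit rational functions: writing $x'=xg_1^2/h^2$ and $y'=(\prod_i g_i/h^3)y$, one computes $dx'/y'=(dx'/dx)\cdot(h^3/(\prod_i g_i y))\,dx$ and must show $(dx'/dx)\cdot h^3/\prod_i g_i=\mathrm{lead}(h)$. The cleanest route is to note $F^*(dx/y)=c\,dx/y$ for \emph{some} constant $c$ (any isogeny pulls back the invariant differential to a scalar multiple), then determine $c$ by looking at the behaviour near the origin: in the formal group $x'\sim x/h(0)^2\cdot(\text{higher})$... actually more carefully, near $\infty$ the local parameter is $t=x/y$, $F$ acts on $t$ by multiplication by $c$ to leading order, and matching leading terms of the Laurent expansions of $x'$ and $y'$ in $t$ shows $c=\mathrm{lead}(h)$ since $g_1$ and $h$ have the same degree $\#S$ and $\mathrm{lead}(g_1)/\mathrm{lead}(h)$ contributes together with the ratio of leading coefficients of $\prod g_i$ over $h^3$. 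I expect the main obstacle to be the $x$-coordinate identity $F(P_2)$-computation: showing $a\cdot g_1(a)^2/h(a)^2=a^p(h(b)/h(a))^2$, equivalently $g_1(a)^2=a^{p-1}h(b)^2$. This is the heart of the theorem and requires the Vélu-type identity relating $\prod_{Q\in S}(a-x(Q+P_1))$ to $\prod_{Q\in S}(b/x(Q)-1)$ and $a^{(p-1)/2}$; I would prove it by comparing divisors of the two sides as functions of $a$ on $E$, or by exploiting the translation $P_1+P_2=P_3$ relations among the $2$-torsion points, which is routine but the bookkeeping with signs and the distinction $p\equiv1$ versus $p\equiv3\bmod 4$ needs care.
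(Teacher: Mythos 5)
Your overall architecture matches the paper's: define the candidate coordinate functions, verify the Weierstrass relation by computing $F(P_2)$ and $F(P_3)$ and pinning down the multiplicative constant, use the kernel-of-reduction hypothesis for integrality, and determine the pullback of the invariant differential by showing it is a constant multiple of $dx/y$. The differential constant you propose to extract via expansion at $\infty$ in $z=-x/y$; the paper instead evaluates the constant function $f(x)=F^*(dx/y)/(dx/y)$ at $x=0$ using the relation $x(Q+P_1)x(Q)=x(Q+P_2)x(Q+P_3)$ and the normalization $h(0)=(-1)^{\#S}$ — both are fine, and the paper actually already uses your expansion-at-$\infty$ idea for the earlier step of fixing the constant relating $\tilde y^2$ to the cubic, so there is no real difference of method here.

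The genuine gap is at the point you yourself flag as the ``heart of the theorem.'' You propose to prove $g_1(a)^2=a^{p-1}h(b)^2$ by comparing divisors in a hypothetical parameter $a$, or by a structural argument about $2$-torsion translates, and you anticipate parity casework in $p\bmod 4$. None of that is needed, and the routes you sketch are not obviously well-posed (e.g.\ ``as a function of $a$'' does not immediately make sense since the set $S$ also moves with $a$). The tool you are missing is the explicit translation-by-$2$-torsion formulae on the Legendre-type model: for $Q$ with $y(Q)\ne 0$ one has $x(Q+P_1)=ab/x(Q)$, $x(Q+P_2)=a(x(Q)-b)/(x(Q)-a)$, $x(Q+P_3)=b(x(Q)-a)/(x(Q)-b)$. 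With the first of these, $a-x(Q+P_1)=a\bigl(1-b/x(Q)\bigr)$, so $g_1(a)=\prod_{Q\in S}\bigl(a-x(Q+P_1)\bigr)=a^{(p-1)/2}\prod_{Q\in S}\bigl(1-b/x(Q)\bigr)=\pm a^{(p-1)/2}h(b)$, and squaring kills the sign — no $p\bmod 4$ cases arise. The same three formulae are what the paper uses for integrality: since $v(x(Q))<0$ for $Q\in S$, they give $v(x(Q+P_i))\ge 0$ directly, hence $g_i\in R[x]$. Your proposed integrality argument (``the reduction of $F$ is the relative Frobenius, which forces $g_i$ to reduce to a polynomial'') is circular, since one needs $g_i\in R[x]$ and $h\in R[x]$ before one can even speak of the reduction of $F$ and compare it to Frobenius. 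Once you have the translation formulae in hand, the rest of your plan goes through essentially as in the paper.

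One smaller remark: your claim that $h\equiv(-1)^{\#S}\bmod\pi$ is correct and more careful than the paper's ``$h\equiv 1\bmod p$''; but note that only $h^2\bmod p$ enters $\tilde x$ and only $h^3\bmod p$ enters $\tilde y$, and (together with $\prod_i g_i\equiv y^{p-1}$) what one actually needs is that $\tilde x\equiv x^p$ and $\tilde y\equiv \pm y^p$, either sign giving a lift of the relative Frobenius after possibly composing with $[-1]$, which has no effect on the kernel or on the $x$-coordinate. You were right to be attentive to the sign, but it is harmless here.
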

\begin{proof}
Abstract theory (see \cite{ra67}) guarantees the existence of a
quotient $E^{(p)}$ of $E$ by $G$.
We will construct suitable coordinate functions $\tilde{x}$
and $\tilde{y}$ on $E^{(p)}$
using the coordinates $x$ and $y$ on $E$.
Consider the functions $\tilde{x},\tilde{y}:E \rightarrow \xp^1_K$ given by
\[
\tilde{x}(x,y)=\frac{x \cdot g_1(x)^2}{h(x)^2}
\]
and
\[
\tilde{y}(x,y)=\frac{ \prod_{i=1}^3 g_i(x)}{h(x)^3} \cdot y.
\]
The function $\tilde{x}$ has divisor
\begin{eqnarray}
\label{divx}
2 \cdot \sum_{Q \in G} \big( Q+P_1 \big) -
2 \cdot \sum_{Q \in G} \big( Q \big)
\end{eqnarray}
and the function $\tilde{y}$ has divisor
\begin{eqnarray}
\label{divy}
\sum_{i=1}^3 \sum_{Q \in G} \big( Q+P_i \big)
-3 \cdot \sum_{Q \in G} \big( Q \big).
\end{eqnarray}
We claim that
the functions $\tilde{x}$ and $\tilde{y}$ are $G$-invariant, i.e.
invariant under the composition
with translations given by points of $G$.
There exist $G$-invariant functions on $E$
having the divisors (\ref{divx}) and (\ref{divy}).
The latter is due to the fact that
by abstract theory the quotient $E^{(p)}$ exists and we
can pull back suitable coordinate functions. They
differ from $\tilde{x}$ resp. $\tilde{y}$
by a constant. This implies the claim.
\newline\indent
We claim that the functions $\tilde{x}$ and $\tilde{y}$ defined above
satisfy the equation
\begin{eqnarray}
\label{neweq}
(\tilde{y})^2=\tilde{x} \cdot \big( \tilde{x}-a^{(p)} \big)
\cdot \big( \tilde{x}-b^{(p)} \big) .
\end{eqnarray}
One computes
\[
a \cdot g_1(a)^2=a \cdot \prod_{Q \in S} \big( a-x(Q+P_1) \big)^2
= a \cdot \prod_{Q \in S} a^2 \cdot \left( 1-\frac{b}{x(Q)} \right)^2
=a^p \cdot h(b)^2
\]
and hence by definition
\[
\tilde{x}(P_2)=a^{(p)}.
\]
Similarly one gets
\[
\tilde{x}(P_3)=b^{(p)}.  
\]
It follows that the divisors of
$(\tilde{y})^2$ and
\[
\tilde{x} \cdot \big( \tilde{x}-a^{(p)} \big)
\cdot \big( \tilde{x}-b^{(p)} \big)
\]
are equal.
Hence these two functions differ by a constant.
We determine the constant by looking at expansions in $z=-\frac{x}{y}$.
One has
\[
x(z)=\frac{1}{z^2}+ \ldots \quad \mbox{and} \quad
y(z)=-\frac{1}{z^3} + \ldots
\]
We set $l=\mathrm{lead}(h)$.
Then
\[
h \big( x(z) \big)=\frac{l}{z^{p-1}} + \ldots
\]
Since the $g_i(x)$, where $i=1,2,3$, are monic we get
\[
\tilde{x} \big( x(z),y(z) \big)=\frac{\frac{1}{z^{2p}}+ \ldots}
{\frac{l^2}{z^{2p-2}}+\ldots}
\quad \mbox{and} \quad
\tilde{y} \big( x(z),y(z) \big)=
\frac{-\frac{1}{z^{3p}}+ \ldots}
{\frac{l^3}{z^{3p-3}}+\ldots}
\]
Hence the above mentioned constant equals $1$ and
equality (\ref{neweq}) holds. This proves our claim.
\newline\indent
Next we prove that the curve $E^{(p)}$ given by
equation (\ref{neweq})
and the morphism $F:E \rightarrow E^{(p)}$ given by
\[
(x,y) \mapsto \big( \tilde{x}(x,y),\tilde{y}(x,y) \big)
\]
are defined over $R$.
Let $Q \in S$.
Using the addition formulae (see \cite{si86} Ch. III, $\S$2) we
compute
\begin{eqnarray}
\label{coord1}
& & x(Q+P_1)=\frac{ab}{x(Q)}, \\
\nonumber & & x(Q+P_2)=\frac{a \cdot (x(Q)-b)}{x(Q)-a}, \\
\nonumber & & x(Q+P_3)=\frac{b \cdot (x(Q)-a)}{x(Q)-b}.
\end{eqnarray}
Note that a point $Q \in E(\bar{K})$ is in the kernel of reduction if
and only if $v \big( x(Q) \big) <0$.
It follows by the equations (\ref{coord1})
that $v \big( x(Q+P_i) \big) \geq 0$
for $Q \in S$ and $i=1,2,3$.
As a consequence we get
$h(x),g_i(x) \in R[x]$ for $i=1,2,3$.
\newline\indent
We claim that the isogeny $F$ reduces to the relative Frobenius
morphism.
The congruences
\begin{eqnarray}
\label{redxy}
\tilde{x}(x,y) \equiv x^p \bmod p \quad \mbox{and} \quad
\tilde{y}(x,y) \equiv y^p \bmod p
\end{eqnarray}
imply that
\[
a^{(p)} \equiv a^p \bmod p \quad \mbox{and} \quad
b^{(p)} \equiv b^p \bmod p.
\]
By the equations (\ref{coord1}) and $v \big( x(Q) \big) <0$
we have $v \big( x(Q + P_1) \big) >0$ for $Q \in S$.
It follows that
\[
g_1(x)^2 \equiv x^{p-1} \bmod p.
\]
Since for $Q \in S$ we have $v \big( x(Q) \big) <0$, it follows
by the definition of $h(x)$ that
\[
h(x) \equiv 1 \bmod p.
\]
We claim that
\begin{eqnarray}
\label{strangeq}
\label{imp}
\prod_{i=1}^3  \big( x-x(Q + P_i) \big) \equiv y^2
\bmod p.
\end{eqnarray}
Let $Q \in S$.
By the equations
(\ref{coord1}) and $v \big( x(Q) \big) <0$ we have
\begin{eqnarray*}
& & x(Q + P_1) \equiv 0 \bmod p, \\
& & x(Q+P_2) = \frac{ax(Q) -ab }{x(Q)-a}
= \frac{ a - \frac{ab}{x(Q)}}{1-\frac{a}{x(Q)}} \equiv a \bmod p
\end{eqnarray*}
and analogously
\begin{eqnarray*}
x(Q+P_3) \equiv b \bmod p.
\end{eqnarray*}
This proves the congruence (\ref{imp}). We conclude that
the congruences (\ref{redxy}) hold. Thus our claim is proven.
Beside that, the above discussion shows
that $a^{(p)}$ and $b^{(p)}$
are well-defined and
that $E^{(p)}$ is an elliptic curve.
\newline\indent
Finally, we claim that
\[
F^* \left( \frac{dx}{y} \right) = \mathrm{lead}(h) \cdot \frac{dx}{y}.
\]
We set
\[
f(x)=\frac{h(x) g_1(x) + 2x
\big( h(x) g_1'(x) - h'(x) g_1(x) \big)}{g_2(x)g_3(x)}.
\]
One computes that
\[
F^* \left( \frac{dx}{y} \right) = f(x) \cdot \frac{dx}{y}.
\]
Since $\frac{dx}{y}$ comes from
a global regular differential we deduce that $f(x)$ is constant.
We have
\[
f(0)=\frac{h(0) \cdot g_1(0)}{g_2(0) \cdot g_3(0)}
=\frac{g_1(0)}{g_2(0) \cdot g_3(0)}.
\]
Recall that $h(x)$ is normalized with respect to its constant term.
The formulae (\ref{coord1}) imply that
\[
x(Q+P_1) \cdot x(Q)=x(Q+P_2) \cdot x(Q+P_3).
\]
The claim now follows from the definition of $h(x)$ and $g_i(x)$,
where $i=1,2,3$.
This finishes the proof of Theorem \ref{froblift}.
\end{proof}
\noindent
Formulae of the same kind, but for separable isogenies, can be found
in \cite{ve71}.

\subsection{Special case: Residue class field of characteristic two}
\label{agm}

In this section we recall some results which are due to J.-F. Mestre
(see the private communication \cite{me00}).
For lack of a suitable reference we provide
proofs where necessary.
Mestre pointed out that a Frobenius lift in characteristic $2$
can be described by the classical arithmetic geometric
mean formulae.
This is explained in the following.
\newline\indent
Let $\xf_q$ be a finite field of characteristic $2$
and let $\xz_q$ denote the ring of Witt vectors
with values in $\xf_q$.
The field of fractions of $\xz_q$ will be denoted by $\xq_q$.
Let $E$ be a smooth elliptic curve over $\xz_q$, in other words
an abelian scheme of relative dimension $1$
over $\xz_q$.
\begin{proposition}
\label{ordellcurv}
We have $E[2] \cong \mu_{2,\xz_q} \times (\xz/ 2 \xz)_{\xz_q}$
if and only if
$E_{\xq_q}$ can be given by an equation of the form
\begin{eqnarray}
\label{defining}
y^2=x(x-a^2)(x-b^2),
\end{eqnarray}
where $a,b \in \xq_q^*$ such that $a \not= \pm b$, the point $(0,0)$ generates
$E[2]^{\mathrm{loc}}(\xq_q)$
and $\frac{b}{a} \in 1+ 8 \xz_q$.
\end{proposition}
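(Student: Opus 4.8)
The plan is to prove both implications by tracking the behaviour of the two-torsion subgroup scheme $E[2]$ under reduction modulo $2$. The key observation is that $E[2]$ is a finite flat group scheme of order $4$ over $\xz_q$, and since the residue field has characteristic $2$, the connected–\'etale sequence
\[
0 \rightarrow E[2]^{\mathrm{loc}} \rightarrow E[2] \rightarrow E[2]^{\mathrm{et}} \rightarrow 0
\]
is nontrivial. The isomorphism $E[2] \cong \mu_{2,\xz_q} \times (\xz/2\xz)_{\xz_q}$ says precisely that this sequence splits with $E[2]^{\mathrm{loc}} \cong \mu_2$ and $E[2]^{\mathrm{et}} \cong \xz/2\xz$, equivalently that $E[2]^{\mathrm{et}}(\xz_q)$ has a full set of $2$ rational points \emph{and} $E[2]^{\mathrm{loc}} \cong \mu_2$ (as opposed to a nonsplit form). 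In particular $E$ is ordinary.

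For the direction ``$\Rightarrow$'': assuming $E[2] \cong \mu_2 \times \xz/2\xz$, the three nonzero points of $E[2]$ are all rational over $\xz_q$, so $E_{\xq_q}$ admits a Legendre-type model $y^2 = x(x-\alpha)(x-\beta)$ with $\alpha,\beta \in \xz_q^*$ (good reduction forces the $\lambda$-invariants to be units and distinct mod the maximal ideal, but here more care is needed: two of the roots become equal mod $2$ since the reduction is supersingular in the naive sense — rather, the reduction has a unique nonzero $2$-torsion point). I would choose the Weierstrass model so that $(0,0)$ generates the connected part $E[2]^{\mathrm{loc}}$; then the other two points $(\alpha,0),(\beta,0)$ map to the two points of $E[2]^{\mathrm{et}}$ and hence reduce to the \emph{same} point, which forces $\alpha \equiv \beta \pmod 2$. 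Writing $\alpha = a^2 u$, one uses that $\mu_2 \hookrightarrow E[2]$ being the standard $\mu_2$ (not a twist) is exactly the condition that the relevant discriminant-type quantity is a square; after a coordinate change $x \mapsto u x$, $y \mapsto u^{3/2} y$ this becomes $y^2 = x(x-a^2)(x-b^2)$ with $a,b \in \xq_q^*$. The congruence $a \equiv b \pmod{\text{(something)}}$ must be sharpened to $b/a \in 1 + 8\xz_q$: this is where I expect to compute. The point is that $a^2 \equiv b^2 \pmod{2^k}$ for the correct $k$, combined with $a,b$ being units, and that in $\xz_q$ (characteristic $2$ residue field, so $2$ ramifies trivially, i.e. $2$ is a uniformizer only if $q = 2$ — in general $\xz_q$ is unramified over $\xz_2$) one has: $c \equiv 1 \pmod 8 \iff c$ is a square of something $\equiv 1 \pmod 4$, or more directly $(b/a)^2 \equiv 1 \pmod{16}$ together with a sign normalization gives $b/a \in 1 + 8\xz_q$. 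I would extract the exact power of $2$ by demanding that the specialized curve be a genuine abelian \emph{scheme} (smooth), i.e. the discriminant $a^4 b^4 (a^2-b^2)^2$ is a unit, which it automatically is, but the finer splitting $E[2]^{\mathrm{loc}} \cong \mu_2$ is detected by a $2$-adic condition on the coordinates of the $2$-torsion points under the formal group, and unwinding Vélu-type or division-polynomial formulae for the halves pins down the $8$.

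For the converse ``$\Leftarrow$'': given the model $y^2 = x(x-a^2)(x-b^2)$ with $a,b \in \xq_q^*$, $a \neq \pm b$, $b/a \in 1 + 8\xz_q$, and $(0,0)$ generating $E[2]^{\mathrm{loc}}(\xq_q)$, I first check good reduction (the model has unit discriminant after clearing denominators, using $b/a \equiv 1$ so $a^2 - b^2$ has the right valuation and $a,b$ are units — one verifies $a,b \in \xz_q^*$ actually follows). Then $(0,0),(a^2,0),(b^2,0)$ are all $\xz_q$-rational, so $E[2](\xz_q)$ is full, giving an injection $(\xz/2\xz)^2 \hookrightarrow E[2]$ of group schemes over $\xz_q$ after taking the constant subgroup generated by these sections — but over a characteristic-$2$ base a full $(\xz/2\xz)^2$ cannot sit inside the order-$4$ scheme $E[2]$, so what one really gets is a splitting $E[2] = H_1 \times H_2$ where each $H_i$ is the schematic closure of one of the order-$2$ subgroups of the generic fibre. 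The hypothesis that $(0,0)$ generates $E[2]^{\mathrm{loc}}$ means $H_1 = \overline{\langle(0,0)\rangle}$ is connected; being a finite flat order-$2$ connected group scheme over $\xz_q$ it is a form of $\mu_2$, and the condition $b/a \in 1 + 8\xz_q$ is precisely what makes this form split, i.e. $H_1 \cong \mu_{2,\xz_q}$. The complementary factor $H_2$, containing a nonzero $\xz_q$-point, must then be \'etale, hence $\cong (\xz/2\xz)_{\xz_q}$. The crux is again the equivalence
\[
H_1 \cong \mu_{2,\xz_q} \quad\Longleftrightarrow\quad b/a \in 1 + 8\xz_q,
\]
which I would establish by explicit coordinates: $H_1 = \Spec \xz_q[x,y]/(y^2 - x(x-a^2)(x-b^2), \text{``}x\text{ large''})$ localized at the origin of the formal group, and a form of $\mu_2$ over $\xz_q$ is classified by $\xz_q^*/(\xz_q^*)^2$; one reads off the class from the coefficients and checks that triviality is equivalent to the stated $8$-adic congruence, using that $1 + 8\xz_q = (1 + 2\xz_q)^2 \cap (1+8\xz_q)$ — i.e. the squares in $1 + 2\xz_q$ are exactly $1 + 8\xz_q$ when $\xz_q$ is unramified over $\xz_2$.

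The main obstacle, in both directions, is the sharp numerical constant: showing that the splitting of $E[2]^{\mathrm{loc}}$ as the trivial $\mu_2$ (rather than a quadratic twist) corresponds to $b/a \in 1 + 8\xz_q$ rather than to some other power of $2$. I expect to handle this by an explicit computation with the formal group or with the $2$-division values, invoking the structure theorem for order-$2$ group schemes over $\xz_q$ (every such is $\mu_2$, $\xz/2\xz$, or a twist $\mu_2^{(u)}$ with $u \in \xz_q^*/(\xz_q^*)^2$) together with the fact $(\xz_q^*)^2 \cap (1 + 2\xz_q) = 1 + 8\xz_q$.
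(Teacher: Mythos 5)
Your plan does not actually contain a proof: at the two places where the hard work happens you explicitly defer (``this is where I expect to compute'', ``I would extract the exact power of $2$\ldots''), and the auxiliary facts you lean on to close those gaps are false for $q>2$. Concretely, you claim that the squares in $1+2\xz_q$ are exactly $1+8\xz_q$. This is only true for $q=2$. In general $(1+2t)^2\equiv 1+4(t+t^2)\pmod 8$, and $t\mapsto t+t^2$ reduces to the Artin--Schreier map $\wp\colon\xf_q\to\xf_q$, whose image is an index-$2$ subgroup of $\xf_q$ that is nonzero once $q>2$; hence $(1+2\xz_q)^2$ strictly contains $1+8\xz_q$. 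The correct statement, and the one the paper actually uses, is that $(1+4\xz_q)^2=1+8\xz_q$; but that forces one to first show $\beta/\alpha\in1+16\xz_q$ (not merely $1+2\xz_q$), which is exactly the content of the paper's Claim~\ref{ordell}, obtained by a valuation-theoretic analysis of the $j$-invariant formula $j=2^8\bigl((\lambda-1)^2+\lambda\bigr)^3/\bigl(\lambda^2(\lambda-1)^2\bigr)$ together with the normalization by a minimal Weierstrass model, yielding $v(\lambda-1)=4$. Your plan has no mechanism to produce this $16$.

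A second, independent gap: you treat the passage from $y^2=x(x-\alpha)(x-\beta)$ to $y^2=x(x-a^2)(x-b^2)$ as an essentially cosmetic rescaling (``Writing $\alpha=a^2u$\ldots''), but showing that $\alpha$ and $\beta$ are individually squares in $\xq_q^*$ is a genuine arithmetic assertion. The paper proves it by passing to the quadratic twist $E^t\colon y^2=x(x+\alpha)(x+\beta)$ over $\xq_q(i)$, invoking the criterion of \cite[Ch.X,Prop.1.4]{si86} that $\alpha,\beta$ are squares iff $[2]^{-1}_{E^t}(0,0)(\xq_q)\neq\emptyset$, and then running a Galois argument on $E[4]^{\mathrm{loc}}$ to rule out the alternative; none of this appears in your sketch. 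Finally, in the converse direction you assert that the model $y^2=x(x-a^2)(x-b^2)$ has unit discriminant. It does not: $b/a\in1+8\xz_q$ forces $v(a^2-b^2)\geq 4$, so the discriminant $16a^4b^4(a^2-b^2)^2$ has valuation at least $12$. The model over $\xq_q$ is not minimal, and a coordinate change $(x,y)\mapsto(u^2x+r,\ldots)$ is needed to reach the smooth $\xz_q$-model; the paper's proof of Claim~\ref{ordell} keeps careful track of this change. So the converse, while indeed not difficult, is not ``the discriminant is a unit'' either.

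Your structural framing (connected--étale sequence, classification of order-$2$ group schemes, a clean criterion for when $E[2]^{\mathrm{loc}}$ is the untwisted $\mu_2$) is an attractive way to think about the statement, and if carried out honestly it could give a more conceptual proof than the paper's $j$-invariant computation. But as written the proposal leaves exactly the quantitative content of the proposition unproved and supports the remaining steps with incorrect elementary lemmas.
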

\begin{prf}{Proof of Proposition \ref{ordellcurv}}
To prove the proposition we need the following fact.
\begin{claim}
\label{ordell}
Let $E$ be an elliptic curve over $\xz_q$ with $E[2] \cong \mu_{2,\xz_q}
\times (\xz / 2 \xz)_{\xz_q}$.
Then $E_{\xq_q}$ can be given by a model
\begin{eqnarray}
\label{startwo}
y^2=x(x-\alpha)(x-\beta), \quad \alpha,\beta \in \xq_q^*, \quad \alpha \not= \beta,
\end{eqnarray}
where $(0,0)$ generates $E[2]^{\mathrm{loc}}(\xq_q)$ and
$\frac{\beta}{\alpha} \in 1 + 16R$.
\end{claim}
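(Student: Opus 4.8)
The plan is to put $E$ into Legendre form with the generator of $E[2]^{\mathrm{loc}}$ sitting at the origin, and then to extract the congruence $\beta/\alpha\in 1+16R$ (here $R=\xz_q$) by comparing this model — which is neither integral nor smooth at $2$ — with a minimal Weierstrass model of $E$ over $\xz_q$.

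First I would note that, since $E[2]\cong\mu_{2,\xz_q}\times(\xz/2\xz)_{\xz_q}$, all three nonzero $2$-torsion points are $\xq_q$-rational, so $E_{\xq_q}$ has a model $y^2=(x-e_1)(x-e_2)(x-e_3)$ with distinct $e_i\in\xq_q$; after translating $x$ I may take $e_1=0$ to be the $x$-coordinate of the generator of $E[2]^{\mathrm{loc}}(\xq_q)$, so that with $\alpha:=e_2$, $\beta:=e_3$ the curve is $y^2=x(x-\alpha)(x-\beta)$, $\alpha,\beta\in\xq_q^*$, $\alpha\ne\beta$, and $(0,0)$ generates $E[2]^{\mathrm{loc}}$. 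What remains is the estimate $\beta/\alpha\in 1+16R$, and for this I would read off two reduction facts from the hypothesis: (a) since $E[2]^{\mathrm{loc}}$ is connected, its special fibre $\mu_{2,\xf_q}$ is infinitesimal at the origin, so $(0,0)\in E(\xz_q)$ lies in the kernel of reduction; (b) since $E[2]^{\mathrm{et}}\cong\xz/2\xz$ is étale, its sections inject under reduction, so $(\alpha,0)$ and $(\beta,0)$ — both mapping to the nonzero section of $E[2]^{\mathrm{et}}$ — reduce to the unique nonzero $2$-torsion point of $\bar E$, in particular not to $0_{\bar E}$.

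Then I would fix a minimal Weierstrass model $\mathcal E/\xz_q$, so $v(\Delta_{\mathcal E})=0$ by good reduction, and write the coordinate change to the Legendre model as $x_{\mathrm{Leg}}=u^2X+r$, $y_{\mathrm{Leg}}=u^3Y+su^2X+w$ with $u,r,s,w\in\xq_q$. Comparing discriminants, $v\bigl(16\,\alpha^2\beta^2(\alpha-\beta)^2\bigr)=v(\Delta_{\mathcal E})+12\,v(u)=12\,v(u)$. Writing $X_0,X_1,X_2$ for the $X$-coordinates on $\mathcal E$ of the three nonzero $2$-torsion points (in the order matching $e_1,e_2,e_3$), a nonzero point of the formal group of $\mathcal E$ has $x$-coordinate of valuation $\le-2$; so (a) gives $v(X_0)\le-2$, hence $v(r)=v(u^2X_0)\le 2v(u)-2$, while (b) gives $v(X_1),v(X_2)\ge0$, hence $v(u^2X_i)\ge 2v(u)>v(r)$ and therefore $v(\alpha)=v(u^2X_1+r)=v(r)$ and likewise $v(\beta)=v(r)$. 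Feeding $v(\alpha)=v(\beta)=v(r)$ and $v(2)=1$ into the discriminant relation gives $v(\alpha-\beta)=6v(u)-2-2v(r)$, and so
\[
v\!\left(\frac{\beta}{\alpha}-1\right)=v(\alpha-\beta)-v(\alpha)=6v(u)-2-3v(r)\ \ge\ 6v(u)-2-3\bigl(2v(u)-2\bigr)=4 ,
\]
i.e.\ $\beta/\alpha-1\in 2^4\xz_q=16R$, which is the assertion.

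The step I expect to be the crux is (a)+(b): turning the group-scheme hypothesis into the valuation bounds $v(X_0)\le-2$ and $v(X_1),v(X_2)\ge0$ on a good model — connectedness of $E[2]^{\mathrm{loc}}$ forcing $(0,0)$ into the formal group, and étaleness of $E[2]^{\mathrm{et}}$ keeping the other two torsion points at finite $X$. After that the computation is purely formal, and the modulus $16=2^4$ is forced just by $v(2)=1$ together with the factor $16$ in the discriminant of a Legendre cubic; in odd residue characteristic the same bookkeeping would yield a different modulus, which is why this lemma is special to residue characteristic $2$.
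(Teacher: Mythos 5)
Your proof is correct and reaches the claim by a genuinely different route. Both you and the paper normalize to a Legendre-type model with $(0,0)$ the local $2$-torsion generator and both compare with a minimal Weierstrass model via $X = u^2x+r$ to establish $v(\alpha)=v(\beta)=v(r)$. The divergence is in how the final estimate on $v(\beta/\alpha-1)$ is extracted: the paper writes down the $j$-invariant as a rational function of $\lambda=\beta/\alpha$, uses $v(j)=0$ (ordinariness plus good reduction) to get the Diophantine relation $0 = 8 + 3v((\lambda-1)^2+\lambda) - 2v(\lambda) - 2v(\lambda-1)$, and together with $v(\lambda)=0$ solves this to get the \emph{exact} value $v(\lambda-1)=4$. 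You instead compare discriminants, $v\bigl(16\alpha^2\beta^2(\alpha-\beta)^2\bigr)=12v(u)$, and feed in the sharper formal-group bound $v(X_0)\le -2$ (hence $v(r)\le 2v(u)-2$) — a refinement the paper does not need, since it only records $v(r)<0$ — to obtain $v(\lambda-1)\ge 4$. Your inequality suffices for the stated claim and for its use in Proposition~\ref{ordellcurv}; the paper's $j$-invariant computation gives a bit more (the equality) at the cost of a more specialized identity, while your discriminant-and-formal-group argument is more robust and makes transparent exactly where the exponent $4 = v(16)$ comes from. Minor remark: you should state explicitly that the hypothesis $E[2]\cong\mu_{2}\times(\xz/2\xz)$ forces ordinary reduction (otherwise $E[2]$ would be local-local), since this is what licenses both $v(j)=0$ in the paper's route and, implicitly, the clean height-one formal group picture in yours.
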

\begin{prf}{Proof of Claim \ref{ordell}}
We can assume that $E$ is given by the equation (\ref{startwo})
and $(0,0)$ generates $E[2]^{\mathrm{loc}}(\xq_q)$.
We set $\lambda=\frac{\beta}{\alpha}$. One casn assume that
$\lambda \in \xz_q$. Then
\begin{eqnarray}
\label{jinv}
j(E_{\xq_q})=2^8 \frac{\big( (\lambda-1)^2 + \lambda \big)^3}{\lambda^2
(\lambda -1 )^2}.
\end{eqnarray}
Let $v$ denote the discrete exponential valuation of $\xq_q$
which satisfies $v(2)=1$.
Since $E$ has ordinary good
reduction one has $j(E) \not\equiv 0 \bmod 2$.
Hence equation (\ref{jinv}) implies that
\begin{eqnarray}
\label{wichtig}
0=8 + 3 v((\lambda-1)^2 + \lambda)-2v(\lambda)-2v(\lambda-1).
\end{eqnarray}
An isomorphism to a Weierstrass minimal model is of the form
\[
(x,y) \rightarrow (u^2x+r, \ldots).
\]
We can assume that the discriminant of the given model is a unit
and hence $u \in \xz_q^*$.
Since $(0,0)$ is in the kernel of reduction it follows that
$v(r)<0$. Also we have $v(u^2 \alpha + r) \geq 0$ and $v(u^2 \beta + r) \geq
0$,
because the points $(\alpha,0)$ and $(\beta,0)$ are not
contained in the kernel
of reduction.
We conclude that $v(\alpha)=v(\beta)=v(r)$ which implies $v(\lambda)=0$.
By (\ref{wichtig}) we cannot have $v(\lambda-1)=0$.
Hence $v(\lambda-1)>0$, and it follows again by (\ref{wichtig})
that $v(\lambda-1)=4$. This implies the claim.
\end{prf}
\noindent
Now we finish the proof of Proposition \ref{ordellcurv}.
Let $E$ be as in the proposition and let
\[
y^2=x(x-\alpha)(x-\beta), \quad \alpha,\beta \in K^*, \quad \alpha \not=\beta,
\]
be a model for $E$ over $\xq_q$ having the properties listed in
Claim \ref{ordell}.
Over $L=\xq_q(i)$ the above curve is isomorphic
to the twisted curve $E^t$
given by
\[
y^2=x(x+\alpha)(x+\beta)
\]
via the isomorphism
\begin{eqnarray}
\label{twistiso}
(x,y) \mapsto (-x,iy).
\end{eqnarray}
Now by \cite[Ch.X,Prop.1.4]{si86} we have an equivalence
\begin{eqnarray}
\label{quadrat}
\mbox{$\alpha,\beta$ squares in $\xq_q^*$} \; \Leftrightarrow \;
     [2]^{-1}_{E^t}(0,0)(\xq_q) \not= \emptyset.
\end{eqnarray}
We claim that the right hand side of (\ref{quadrat}) holds.
Let $G_L = \mbox{Gal}(L/ \xq_q)$.
The isomorphism (\ref{twistiso}) induces an isomorphism of groups
$E[4](L) \stackrel{\sim}{\rightarrow} E^t[4](L)$.
One computes $\sigma(P^t)=-(\sigma(P))^t$ for $\mbox{id} \not= \sigma \in G_L$.
As a consequence
$\sigma(P^t)=P^t$ if and only if $\sigma(P)=-P$.
Hence we have
\begin{eqnarray}
\label{lasting}
[2]^{-1}_{E}(0,0)(\xq_q) = \emptyset \implies [2]^{-1}_{E^t}(0,0)(\xq_q) \not=
\emptyset.
\end{eqnarray}
Suppose $P \in [2]^{-1}_{E}(0,0)(\xq_q)$.
Let $Q$ be a point of order $2$
which does not lie in the kernel of reduction. Then
\[
[2]^{-1}_{E}(0,0)(\xq_q) = \{ P, -P, P+Q, -(P+Q) \}.
\]
Two of these four points have to be in the kernel of reduction.
Thus the points of $E[4]^{\mathrm{loc}}$ are rational over $\xq_q$.
This implies $i \in \xq_q$ which is a contradiction.
Since the converse direction in Proposition \ref{ordellcurv}
is trivial, this finishes the proof.
\end{prf}
\noindent
Assume now
that $E$ satisfies the equivalent conditions of Proposition
\ref{ordellcurv} and let $E_{\xq_q}$ be given by equation (\ref{defining}).
By our assumption $E$ has ordinary reduction.
The condition $\frac{b}{a} \in 1 + 8 \xz_q$ implies that $\frac{b}{a}$ is a
square in $\xz_q$.
We set in analogy to the classical AGM formulae
\begin{eqnarray}
\label{classicalform}
\tilde{a}=\frac{a+b}{2}, \quad
\tilde{b}=\sqrt{ab}=a \sqrt{\frac{b}{a}},
\end{eqnarray}
where we choose $\sqrt{\frac{b}{a}} \in 1 +4 \xz_q$.
\begin{proposition}
\label{agmprop}
Let $E^{(2)}$ be defined as in Section \ref{sectconv}.
The curve $E_{\xq_q}^{(2)}$ admits the model
\begin{eqnarray}
\label{tileq}
y^2=x(x-\tilde{a}^2)(x-\tilde{b}^2)
\end{eqnarray}
where the point $(0,0)$ generates $E^{(2)}[2]^{\mathrm{loc}}(\xq_q)$
and $\frac{\tilde{b}}{\tilde{a}} \in 1 +8 \xz_q$.
The isogeny $F_{\xq_q}:E_{\xq_q} \rightarrow E_{\xq_q}^{(2)}$ is given by 
\begin{eqnarray*}
(x,y) \mapsto \left(
\frac{(x+ab)^2}{4x},
\frac{y(ab-x)(ab+x)}{8x^2}
\right).
\end{eqnarray*}
\end{proposition}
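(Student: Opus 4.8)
The plan is to reduce the statement to one classical degree-$2$ isogeny followed by a coordinate change that normalizes the quotient curve, and then to check the two normalization conditions by hand.

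Since $(0,0)$ generates $E[2]^{\mathrm{loc}}(\xq_q)$ by hypothesis, over $\xq_q$ the subgroup $E[2]^{\mathrm{loc}}$ is the order-$2$ subgroup $\langle(0,0)\rangle$, and $E^{(2)}_{\xq_q}=E_{\xq_q}/\langle(0,0)\rangle$. Writing (\ref{defining}) as $y^2=x^3+Ax^2+Bx$ with $A=-(a^2+b^2)$ and $B=a^2b^2$, the classical $2$-isogeny with kernel $\langle(0,0)\rangle$ (see \cite[Ch.~III]{si86}, and \cite{ve71}) is
\[
\phi\colon(x,y)\longmapsto\Bigl(\tfrac{y^2}{x^2},\ y\,\tfrac{B-x^2}{x^2}\Bigr),
\]
with image curve $y^2=x^3-2Ax^2+(A^2-4B)x$. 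Because $A^2-4B=(a^2-b^2)^2$ and $-(a^2+b^2)\pm2ab=-(a\mp b)^2$, this image is $y^2=x\bigl(x+(a-b)^2\bigr)\bigl(x+(a+b)^2\bigr)$. I would then post-compose $\phi$ with the $\xq_q$-isomorphism $\iota$ which in the old coordinates reads $x=4x'-(a+b)^2$, $y=8y'$: it sends the $2$-torsion $x$-coordinates $0$, $-(a-b)^2$, $-(a+b)^2$ to $(a+b)^2/4=\tilde a^2$, $\bigl((a+b)^2-(a-b)^2\bigr)/4=ab=\tilde b^2$ and $0$, while the factor $8$ on $y$ renormalizes the leading coefficient to $1$; hence the image of $E^{(2)}_{\xq_q}$ is exactly the curve (\ref{tileq}).

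Substituting the formula for $\phi$ and using $A+(a+b)^2=2ab$, the composite $F=\iota\circ\phi$ becomes
\[
\tilde x=\tfrac14\Bigl(x+A+\tfrac Bx\Bigr)+\tfrac{(a+b)^2}{4}=\frac{(x+ab)^2}{4x},\qquad
\tilde y=\tfrac18\,y\,\tfrac{B-x^2}{x^2}=\frac{y(ab-x)(ab+x)}{8x^2},
\]
which is the asserted formula for $F_{\xq_q}$; by Proposition \ref{frobeniusoverr} the morphism $F$ is the quotient map $E\to E/E[2]^{\mathrm{loc}}$, so this computes $F_{\xq_q}$ up to the choice of isomorphism onto (\ref{tileq}), and the sign of $\tilde y$ is immaterial modulo $2$ (both choices of $\phi$ lift the relative Frobenius). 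The three roots of (\ref{tileq}) are distinct because $\tilde a^2-\tilde b^2=(a-b)^2/4\neq0$ (as $a\neq\pm b$), and $\tilde a,\tilde b\in\xq_q^*$ since $a+b=a(1+b/a)$ with $1+b/a$ twice a unit; so (\ref{tileq}) is a genuine elliptic curve.

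It remains to verify the normalizations. For $\tilde b/\tilde a\in1+8\xz_q$, writing $\sqrt{b/a}=1+4s$ with $s\in\xz_q$ (allowed since $\sqrt{b/a}\in1+4\xz_q$) one computes
\[
\frac{\tilde b}{\tilde a}=\frac{2\sqrt{b/a}}{1+b/a}=\frac{1+4s}{1+4s+8s^2}=1-\frac{8s^2}{1+4s+8s^2}\in1+8\xz_q,
\]
as $1+4s+8s^2\in\xz_q^*$. For the claim that $(0,0)$ generates $E^{(2)}[2]^{\mathrm{loc}}(\xq_q)$, note that $E^{(2)}$ is ordinary, so $E^{(2)}[2]^{\mathrm{loc}}=\mu_{2,\xz_q}$ has a unique nonzero $\xq_q$-point; hence exactly one of the three $2$-torsion points $(0,0)$, $(\tilde a^2,0)$, $(\tilde b^2,0)$ lies in the kernel of reduction, and it suffices to exclude the last two. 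For this I would argue as in Claim \ref{ordell}: if a $2$-torsion point of a model $y^2=x(x-\gamma)(x-\delta)$ over $\xq_q$ sits at the origin and lies in the kernel of reduction, then passing to a minimal Weierstrass model over $\xz_q$ (where it acquires an $X$-coordinate of negative valuation while the two others have $X$-coordinates of non-negative valuation) forces $v(\gamma)=v(\delta)$. Now $v(a)=v(b)$ (since $b/a\in\xz_q^*$) and $v(a+b)=v(a)+1$ (since $1+b/a\in2+8\xz_q$), so $v(\tilde a^2)=v((a+b)^2/4)=2v(a)=v(ab)=v(\tilde b^2)$; but recentering (\ref{tileq}) at $(\tilde a^2,0)$ or at $(\tilde b^2,0)$ yields a model whose two nonzero roots have valuations $2v(a)$ and $v((a-b)^2/4)=2v(a-b)-2\ge2v(a)+4$, hence unequal, so neither point lies in the kernel of reduction, and $(0,0)$ does.

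The step I expect to be the main obstacle is this last one: in residue characteristic $2$ the model (\ref{defining}) is not defined over $\xz_q$, so the kernel of reduction cannot be read off directly and the valuation bookkeeping borrowed from Claim \ref{ordell} has to be carried out with care. A secondary point is that Theorem \ref{froblift} is unavailable here ($p=2$), so the isogeny and its image curve must be produced from scratch, the sign of $\tilde y$ being only a matter of convention.
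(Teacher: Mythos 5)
Your proof is correct and follows essentially the same route as the paper: realize $E_{\xq_q}^{(2)}$ via the classical $2$-isogeny with kernel $\langle(0,0)\rangle$, post-compose with an affine change of variable to bring the quotient into the shape (\ref{tileq}), simplify $\tilde x$ to $(x+ab)^2/(4x)$ using $A+(a+b)^2=2ab$, and identify the local $2$-torsion point by the valuation bookkeeping of Claim \ref{ordell}. The only organizational difference is that the paper folds the scaling $(X,Y)\mapsto(X/4,Y/8)$ into the isogeny formula (so its intermediate model is $y^2=x\bigl(x+((a-b)/2)^2\bigr)\bigl(x+((a+b)/2)^2\bigr)$, up to a sign typo in the printed version that cancels against a compensating sign typo in the subsequent shift) and then shifts by $((a+b)/2)^2$, while you apply the unscaled $2$-isogeny and then the single substitution $x=4x'-(a+b)^2$; these are the same computation. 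In two places you are actually more complete than the paper's own proof: you verify $\tilde b/\tilde a\in1+8\xz_q$ explicitly (the paper's proof does not address this condition at all), and you rule out $(\tilde a^2,0)$ and $(\tilde b^2,0)$ directly by showing that after recentering the two nonzero roots have unequal valuations, whereas the paper argues that $(0,0)$ on the intermediate curve is the image of $(a^2,0)$ (hence not in the kernel of reduction) and then appeals more loosely to ``the root of smaller valuation'' being the kernel point. Both variants rest on the observation extracted from Claim \ref{ordell}, and both require first knowing that exactly one of the three nontrivial $2$-torsion points lies in the kernel of reduction, which you justify correctly via ordinarity of $E^{(2)}$.
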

\begin{proof}
It is straight forward to verify that
the curve $E_{\xq_q}$ is isogenous with the elliptic curve
\begin{eqnarray}
\label{secondcurve}
y^2=x \cdot \left( x- \left( \frac{a-b}{2} \right)^2 \right)
\cdot \left( x- \left( \frac{a+b}{2} \right)^2 \right)
\end{eqnarray}
via the isogeny
\begin{eqnarray}
\label{twoisog}
(x,y) \mapsto \left( \frac{y^2}{4x^2}, \frac{y(ab-x)(ab+x)}{8x^2}
\right)
\end{eqnarray}
which has kernel equal to
\[
E[2]^{\mathrm{loc}}(\xq_q)= \langle ( 0,0) \rangle.
\]
The point $(0,0)$ on the curve defined by (\ref{secondcurve})
is not in the kernel of reduction, because it is
the image of the point $(a^2,0)$ or $(b^2,0)$, where each of the latter
points induces a non-trivial point in $E[2]^{\mathrm{et}}(\xq_q)$.
Consider the $x$-coordinates
\[
\left( \frac{a-b}{2} \right)^2  \quad \mbox{and}
\quad \left( \frac{a+b}{2} \right)^2
\]
of the other two $2$-torsion points.
The one with the smaller valuation is 
the $x$-coordinate of the $2$-torsion point in the kernel of
reduction, because an
isomorphism over $\xq_q$ to a minimal model
preserves the ordering given by the valuations.
Let $v$ be a discrete exponential valuation of $\xq_q$ such that $v(2)=1$.
Since $b/a \in 1 + 8 \xz_q$ it follows that
\[
v(a+b)=v(a)+v(1+\frac{b}{a})=v(a)+1 < v(a) +
v(1-\frac{b}{a}) = v(a-b).
\]
The transformation
\begin{eqnarray}
\label{trafo}
(x,y) \mapsto \left( x - \left( \frac{a+b}{2} \right)^2 , y \right)
\end{eqnarray}
yields the model (\ref{tileq}).
The proposition now follows by composing the morphisms
(\ref{twoisog}) and (\ref{trafo}). 
\end{proof}

\section{Torsion points on ordinary elliptic curves}
\label{divpoly}

Let $K$ be a field of characteristic $\not=2$.
Suppose we are given an elliptic curve $E$ over $K$ by an equation
\begin{eqnarray}
\label{definingeq}
y^2=x(x-a)(x-b)
\end{eqnarray}
where $a,b \in K^*$ and $a \not=b$.
In this
section we introduce the so-called division polynomials, which
describe the torsion points of $E$.
\begin{definition}
Let
\begin{eqnarray*}
& & \psi_0=0, \; \psi_1=1, \; \psi_2=2y \\
& & \psi_3=3x^4-4(a+b)x^3+6abx^2-(ab)^2 \\
& & \psi_4=2y \big( 2x^6-4(a+b)x^5+10abx^4-10(ab)^2x^2 \\
& & \quad +4(ab)^2(a+b)-2(ab)^3 \big) \\
& & \ldots \\
& & \psi_{2l+1}=\psi_{l+2}\psi_l^3-\psi_{l-1}\psi^3_{l+1}, \; l \geq
2, \\ 
& &
\psi_{2l}= \frac{\psi_l}{2y}(\psi_{l+2}\psi^2_{l-1}-\psi_{l-2}\psi^2_{l+1}),
\; l>2.
\end{eqnarray*}
The polynomial $\psi_m(x,y)$, $m \geq 0$, is called the $m$-th
division polynomial of $E$.
\end{definition}
\noindent
The polynomial $\psi_m(x,y)$ defines a function on $E$.
Let $\bar{K}$ denote an algebraic closure of $K$.
The following proposition is classical.
\begin{proposition}
\label{classical}
Let $m \geq 2$.
Assume that $K= \bar{K}$. Then the function
\[
\psi_m:E \rightarrow \xp^1_K, (x,y) \mapsto \psi_m(x,y)
\]
has divisor
\[
\sum_{P \in E[m](K)} 
\mathrm{deg}_{\mathrm{i}}[m] \cdot (P) \quad  - \quad m^2 \cdot (0_E),
\]
where $0_E$ denotes the point at infinity and $\mathrm{deg}
_{\mathrm{i}}[m]$
the degree of inseparability of the isogeny $[m]:E \rightarrow E$.  
\end{proposition}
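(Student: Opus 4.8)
The plan is to read off the divisor of $\psi_m$ from the classical description of the multiplication-by-$m$ map in terms of division polynomials, so that the factor $\mathrm{deg}_{\mathrm{i}}[m]$ enters through the ramification of the isogeny $[m]$. I would first recall the standard properties of division polynomials, valid over $K=\bar{K}$ in any characteristic $\neq 2$: the square $\psi_m^2$ is a polynomial in $x$ alone, and there is a polynomial $\phi_m(x)\in K[x]$, prime to $\psi_m^2$, such that $x\circ[m]=\phi_m(x)/\psi_m^2$ as functions on $E$ (cf.\ \cite{si86}). For $m\geq 2$ the function $\psi_m$ is regular on the affine curve $E\setminus\{0_E\}$, being a polynomial in $x$ (for $m$ odd) or $2y$ times such (for $m$ even); hence its divisor has the shape $D-c\,(0_E)$ with $D$ effective and supported away from $0_E$ and $c\geq 0$.

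Next I would pin down $D$. For a point $P\neq 0_E$ one has $\mathrm{ord}_P(x\circ[m])<0$ precisely when $[m]P=0_E$, i.e.\ $P\in E[m](K)$. Since $\phi_m$ is prime to $\psi_m^2$, such a pole of $x\circ[m]=\phi_m(x)/\psi_m^2$ can only come from a zero of $\psi_m^2$, hence of $\psi_m$, at $P$; so the zero set of $\psi_m$ on $E\setminus\{0_E\}$ is exactly $E[m](K)\setminus\{0_E\}$. To obtain the multiplicities, note that the nonconstant homomorphism $[m]$ has constant ramification index at every point, equal to $\mathrm{deg}[m]/\#\,\mathrm{Ker}[m](K)=\mathrm{deg}_{\mathrm{i}}[m]$. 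As $x$ has a pole of order exactly $2$ at $0_E=[m]P$, the composition formula gives
\[
\mathrm{ord}_P(x\circ[m])=\mathrm{deg}_{\mathrm{i}}[m]\cdot\mathrm{ord}_{0_E}(x)=-2\,\mathrm{deg}_{\mathrm{i}}[m],
\]
whereas $\phi_m(x(P))\neq 0$ forces $\mathrm{ord}_P(x\circ[m])=-\mathrm{ord}_P(\psi_m^2)=-2\,\mathrm{ord}_P(\psi_m)$. Comparing, $\mathrm{ord}_P(\psi_m)=\mathrm{deg}_{\mathrm{i}}[m]$ for every nonzero $m$-torsion point.

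Finally, $\sum_P\mathrm{ord}_P(\psi_m)=0$ forces $c=\mathrm{deg}_{\mathrm{i}}[m]\cdot\big(\#\,E[m](K)-1\big)=m^2-\mathrm{deg}_{\mathrm{i}}[m]$, and since $\mathrm{deg}_{\mathrm{i}}[m]\cdot(0_E)-m^2\,(0_E)=-c\,(0_E)$, the divisor of $\psi_m$ equals $\sum_{P\in E[m](K)}\mathrm{deg}_{\mathrm{i}}[m]\cdot(P)-m^2\,(0_E)$, which is the claim.

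The delicate point is the inseparable case. One needs to know that $[m]$ is everywhere ramified of order exactly $\mathrm{deg}_{\mathrm{i}}[m]$ — this is clear because its purely inseparable part is a power of the Frobenius — and that the coprimality of $\phi_m$ and $\psi_m^2$ survives in characteristic $p\mid m$, where $\psi_m$ has smaller degree in $x$ than usual; the latter follows from the identity $\phi_m=x\psi_m^2-\psi_{m-1}\psi_{m+1}$ together with $E[m]\cap E[m\pm 1]=\{0_E\}$. When $\mathrm{char}\,K=0$, or more generally $p\nmid m$, the argument specializes to the familiar statement that $\psi_m$, respectively $\psi_m/2y$, has simple zeros exactly at the $x$-coordinates of the nonzero $m$-torsion points, and one counts directly.
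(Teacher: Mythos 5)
Your argument is the standard one — read off $\mathrm{div}(\psi_m)$ from the identity $x\circ[m]=\phi_m(x)/\psi_m^2$ together with the fact that $[m]$, like any isogeny of elliptic curves, has constant ramification index equal to its inseparable degree — and it is correct. The paper gives no proof of this proposition, simply labeling it ``classical,'' so there is nothing in the text to compare against; the route you take is the one a reader would expect and the one the surrounding material (Corollary~\ref{super}, Proposition~\ref{newton}) implicitly relies on.

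One caveat on the ``delicate point'' you yourself flag. Your proposed justification of the coprimality of $\phi_m$ and $\psi_m^2$ in characteristic $p\mid m$ — via $\phi_m=x\psi_m^2-\psi_{m-1}\psi_{m+1}$ and $E[m]\cap E[m\pm1]=\{0_E\}$ — quietly uses the conclusion of the proposition for $m\pm1$ (to identify the zeros of $\psi_{m\pm1}$ with $(m\pm1)$-torsion), so as written it would need to be organized as a simultaneous induction, and the $m+1$ case sits above $m$, which makes a clean induction awkward. A tighter and characteristic-free argument is available: $x\circ[m]$ is an even function, hence equal to $R(x)$ for some $R\in K(x)$, and $\deg(x\circ[m])=\deg(x)\cdot\deg[m]=2m^2$ forces $R$ to have degree exactly $m^2$ in lowest terms. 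Since $\phi_m$ is monic of degree $m^2$ in $x$ over $\mathbb{Z}[a,b]$ (and hence after any specialization with residue characteristic $\neq 2$) while $\deg\psi_m^2<m^2$, any nontrivial common factor of $\phi_m$ and $\psi_m^2$ would drop $\deg R$ below $m^2$, a contradiction. Substituting this for your coprimality step makes the proof fully self-contained in all characteristics, including $p\mid m$.
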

\noindent
In the following we assume that $m \geq 3$ is an odd integer.
By induction one proves that
the variable $y$ in $\psi_m(x,y)$ occurs only with even
exponent.
Substituting successively $y^2$ by $x(x-a)(x-b)$
we get a polynomial in the variable $x$.
We denote the resulting polynomial by $\psi_m(x)$.
Choose $S \subseteq E[m](\bar{K})$ such that $S \cap -S = \emptyset$ and
$E[m](\bar{K}) = S \cup -S \cup \{ 0_E \}$.
\begin{corollary}
\label{super}
There exists a constant $c \in K$ such that 
\begin{eqnarray}
\label{divpolyeq}
\psi_m(x)= c \prod_{P \in S} \left( x-x(P) \right)^{\mathrm{deg}_i[m]}
\end{eqnarray}
where $x(P)$ denotes the $x$-coordinate of $P$.
If $\mathrm{deg}_i[m]=1$, then
\[
\mathrm{deg}(\psi_p)=\frac{m^2-1}{2} \quad \mbox{and} \quad c=m.
\]
\end{corollary}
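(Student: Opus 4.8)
The plan is to reinterpret $\psi_m(x)$ as a rational function on $E$ and to compare its divisor with that of the product on the right-hand side of (\ref{divpolyeq}), using Proposition \ref{classical}. First I would note that $\psi_m(x)$ is obtained from $\psi_m(x,y)$ by repeatedly substituting $y^2 = x(x-a)(x-b)$, so $\psi_m(x,y)-\psi_m(x)$ lies in the ideal generated by $y^2-x(x-a)(x-b)$; hence $\psi_m(x)$ and $\psi_m(x,y)$ define one and the same function on $E$. Write $d=\mathrm{deg}_i[m]$ and work over $\bar K$ for the divisor computation, putting $w(x)=\prod_{Q\in S}\bigl(x-x(Q)\bigr)$.

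Next I would compute $\mathrm{div}(w)$ on $E_{\bar K}$. Since $m$ is odd, no $Q\in S$ is a $2$-torsion point, so $Q\neq -Q$, and the function $x-x(Q)$ has simple zeros at $Q$ and $-Q$ and a double pole at $0_E$, i.e. $\mathrm{div}\bigl(x-x(Q)\bigr)=(Q)+(-Q)-2(0_E)$. As $E[m](\bar K)$ is the disjoint union of $S$, $-S$ and $\{0_E\}$, summing yields $\mathrm{div}(w)=\sum_{0_E\neq Q\in E[m](\bar K)}(Q)-(N-1)(0_E)$ with $N=\#E[m](\bar K)=m^2/d$; note $N$ is odd because $m$ is, so $|S|=(N-1)/2$. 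Raising to the power $d$ and comparing with Proposition \ref{classical}, which gives $\mathrm{div}(\psi_m)=d\sum_{0_E\neq Q\in E[m](\bar K)}(Q)+\bigl(d-m^2\bigr)(0_E)$, one sees the two divisors agree because $d(N-1)=m^2-d$. Hence $\psi_m(x)/w(x)^d$ is a function on $E$ with empty divisor, so it is a constant $c\in\bar K^*$.

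To descend this to $K$: the set $\{x(Q):Q\in S\}$ is exactly the set of $x$-coordinates of the nonzero $m$-torsion points, hence is stable under $\mathrm{Gal}(\bar K/K)$, so $w(x)$ is a monic polynomial in $K[x]$; together with $\psi_m(x)\in K[x]$ this forces $c\in K$. Moreover $\psi_m(x)$ and $c\,w(x)^d$ both lie in the subfield $\bar K(x)\hookrightarrow\bar K(E)$ and coincide there, so they coincide in $\bar K[x]$; this is (\ref{divpolyeq}). When $d=1$ the right-hand side is monic of degree $|S|=(m^2-1)/2$, whence $\mathrm{deg}(\psi_m)=(m^2-1)/2$ and $c$ equals the leading coefficient of $\psi_m(x)$.

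It remains to identify this leading coefficient as $m$ when $d=1$, i.e. the classical fact that $\psi_m(x)=m\,x^{(m^2-1)/2}+(\text{lower order terms})$. I would prove it by induction on $m$ directly from the recursion relations in the definition, carrying along the companion statement that for even $m$ one has $\psi_m=2y\,f_m(x)$ with $f_m\in K[x]$ of degree $(m^2-4)/2$ and leading coefficient $m/2$; since the recursion for $\psi_{2l+1}$ and $\psi_{2l}$ involves four consecutive indices of both parities, the odd and even statements have to be run simultaneously. The base cases $\psi_1,\psi_2,\psi_3,\psi_4$ are read off directly, and in the inductive step the two top-degree terms of the recursion combine correctly; for instance the leading coefficient of $\psi_{2l+1}$ comes out as $(l+2)l^3-(l-1)(l+1)^3=2l+1$. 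This leading-term bookkeeping --- keeping track of degrees, leading coefficients, and the even/odd split --- is the only genuinely computational ingredient and is the step I expect to be the main nuisance; the conceptual content of the corollary is entirely in the divisor comparison, with the one subtle point being the use of the hypothesis ``$m$ odd'' to guarantee both that $N$ is odd and that no $Q\in S$ is $2$-torsion.
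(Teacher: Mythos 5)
Your proof is correct and takes essentially the same route as the paper: the product formula follows by comparing divisors via Proposition \ref{classical}, and the identification $c=m$ when $\mathrm{deg}_i[m]=1$ follows by induction on the recursion for $\psi_m$ (carrying the even/odd split and leading-coefficient bookkeeping). The paper's proof is only a brief sketch; your write-up fills in the same steps --- the divisor computation $\mathrm{div}\bigl(x-x(Q)\bigr)=(Q)+(-Q)-2(0_E)$, the count $dN=m^2$, the Galois descent to $K$, and the leading-coefficient recursion --- with nothing done differently in substance.
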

\begin{proof}
The first claim follows from Proposition \ref{classical},
since the function on the right hand side of equation
(\ref{divpolyeq}) has the same divisor as $\psi_m(x)$.
Now assume that $\mathrm{deg}_i[m]=1$. This implies that
$\psi_m(x)$ has degree $(m^2-1)/2$. Using induction on $m$ one shows
that the coefficient of $x^{(m^2-1)/2}$ in $\psi_m(x)$ equals $m$.
This implies the second claim.
\end{proof}
\noindent
Now let $\xf_q$ be a finite field of characteristic $p>2$.
Let $\xz_q$ be the ring of Witt vectors
with values in $\xf_q$ and let $\xq_q$ be the field of fractions of $\xz_q$.
Assume that $a,b \in \xz_q^*$ and $a \not\equiv b \bmod p$, where $a$
and $b$ are the coefficients of the elliptic curve (\ref{definingeq}).
By our assumption, the model (\ref{definingeq}) is a Weierstrass
minimal model and $E$ has good reduction.
Let $v_p$ be an additive discrete valuation of $\xq_q$
which is assumed to be normalized such that $v_p(p)=1$.
\begin{proposition}
\label{newton}
The curve $E$ has ordinary reduction if and only if
the Newton polygon of  $\frac{1}{p}\psi_p(x)$ with
respect to $v_p$ is as in \rm{Figure \ref{fig:newton}}.
\end{proposition}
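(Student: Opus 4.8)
The plan is to read off the Newton polygon from the factorization of $\frac1p\psi_p(x)$ over $\bar\xq_q$. Since $\xq_q$ has characteristic $0$ the isogeny $[p]$ is separable, so $\deg_i[p]=1$ and Corollary \ref{super} gives $\frac1p\psi_p(x)=\prod_{P\in S}\big(x-x(P)\big)$, a monic polynomial of degree $(p^2-1)/2$ whose Newton polygon with respect to $v_p$ is therefore determined by the multiset of valuations $v_p(x(P))$, $P\in S$, i.e. by the valuations of the $x$-coordinates of all nonzero $p$-torsion points of $E$ over $\bar\xq_q$. I would first recall the dichotomy already used in the proof of Theorem \ref{froblift}: a point $Q$ lies in the kernel of reduction if and only if $v_p(x(Q))<0$, and then $v_p(x(Q))=-2\,v_p(z(Q))$ for the standard parameter $z=-x/y$ of the formal group $\hat E$; whereas if $Q$ has order $p$ and is not in the kernel of reduction then $\bar Q$ is a nonzero point of order $p$ of $\bar E$, hence not $2$-torsion, so $x(\bar Q)\neq0$ and $v_p(x(Q))=0$. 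Thus the value $0$ occurs with multiplicity $\tfrac12\big(p^2-\#E[p]^{\mathrm{loc}}\big)$, and the negative values are those contributed by the $\tfrac12\big(\#E[p]^{\mathrm{loc}}-1\big)$ nonzero geometric points of the connected group scheme $E[p]^{\mathrm{loc}}=\hat E[p]$.

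Since $E$ has good reduction, $\bar E$ is either ordinary or supersingular, i.e. $\#E[p]^{\mathrm{loc}}$ equals $p$ or $p^2$. To compute the valuations of the points of $\hat E[p]$ I would apply Weierstrass preparation to $[p]_{\hat E}(z)/z=p+a_2z+a_3z^2+\cdots$. In the ordinary case $\hat E$ has height $1$, so modulo $p$ the series $[p]_{\hat E}(z)$ is a power series in $z^{p}$ with nonzero coefficient of $z^{p}$; hence $[p]_{\hat E}(z)/z$ is a unit power series times a distinguished polynomial $P(z)$ of degree $p-1$ with $v_p(P(0))=1$, whose Newton polygon is the single segment from $(0,1)$ to $(p-1,0)$, so every nonzero point $Q$ of $\hat E[p]$ satisfies $v_p(z(Q))=1/(p-1)$, i.e. $v_p(x(Q))=-2/(p-1)$. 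The identical argument with height $2$ in the supersingular case produces a distinguished polynomial of degree $p^2-1$ with Newton polygon the segment from $(0,1)$ to $(p^2-1,0)$, so all $p^2-1$ nonzero $p$-torsion points satisfy $v_p(x(Q))=-2/(p^2-1)$.

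Assembling these, in the ordinary case the $(p^2-1)/2$ roots of $\frac1p\psi_p(x)$ consist of $(p-1)/2$ roots of valuation $-2/(p-1)$ and $(p^2-p)/2$ roots of valuation $0$, so its Newton polygon is the horizontal segment from $(0,-1)$ to $\big(\tfrac{p^2-p}{2},-1\big)$ followed by the segment of slope $\tfrac2{p-1}$ ending at $\big(\tfrac{p^2-1}{2},0\big)$ — the polygon of Figure \ref{fig:newton}. In the supersingular case all $(p^2-1)/2$ roots have valuation $-2/(p^2-1)$, giving the single segment from $(0,-1)$ to $\big(\tfrac{p^2-1}{2},0\big)$, which has no horizontal part and is therefore a different polygon. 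As $E$ falls into exactly one of the two cases, the Newton polygon of $\frac1p\psi_p(x)$ is the one of Figure \ref{fig:newton} if and only if $\bar E$ is ordinary; concretely, ordinarity is equivalent to the polygon having a slope-$0$ segment, necessarily of length $\tfrac{p^2-p}{2}$. The one step requiring care is the Weierstrass preparation: one must identify the degree of the distinguished polynomial attached to $[p]_{\hat E}(z)/z$ as $p-1$ in the ordinary case and $p^2-1$ in the supersingular case, which comes down to the reduction $[p]_{\bar E}$ being a power series in $z^{p}$, resp. in $z^{p^2}$ — precisely the point where the height of the formal group, equivalently the ordinary/supersingular alternative, enters the argument.
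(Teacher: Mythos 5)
Your proof is correct, but it reaches the key conclusion by a genuinely different route from the paper. Both arguments start from Corollary \ref{super}, writing $\frac1p\psi_p(x)=\prod_{P\in S}(x-x(P))$, and both split $S$ into the roots of valuation $0$ (the $p$-torsion outside the kernel of reduction, giving the slope-$0$ segment of length $p(p-1)/2$ in the ordinary case, and nothing in the supersingular case). The difference is in how one identifies the remaining positive-slope part. The paper argues by integrality: every coefficient of $\frac1p\psi_p$ lies in $p^{-1}\xz_q$, so every vertex of the Newton polygon is a lattice point with ordinate $\geq -1$; since the polygon ends at $\big(\frac{p^2-1}{2},0\big)$ and the slope-$0$ part has length exactly $p(p-1)/2$, the positive-slope part is pinned to be the single segment from $\big(\frac{p(p-1)}{2},-1\big)$ to $\big(\frac{p^2-1}{2},0\big)$, with no further vertices possible. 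You instead compute the valuations of the $z$-coordinates of $\hat{E}[p]$ outright by applying Weierstrass preparation to $[p]_{\hat{E}}(z)/z$ and reading off the degree of the distinguished polynomial from the height of the formal group. Both are sound; your route imports the formal-group machinery (height, Weierstrass preparation) that the paper deliberately avoids, but in return it yields the valuations $-2/(p-1)$ (resp.\ $-2/(p^2-1)$) directly and pins down the supersingular Newton polygon exactly as a single segment of slope $\frac{2}{p^2-1}$, whereas the paper is content to observe that in the supersingular case the polygon simply has no horizontal part.
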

\begin{proof}
Let $\psi_p(x) \in \xz_q[x]$ denote the $p$-th division polynomial on $E$
and $\bar{\psi}_p(x) \in \xf_q[x]$ its reduction modulo $p$.
The degree of inseparability of $[m]$ on $E$
equals $1$. By Corollary \ref{super} the polynomial
$\psi_p(x)$ has degree $(p^2-1)/2$ and leading coefficient $p$.
We choose an extension of $v_p$ to $\bar{\xq}_q$, that we denote by
the same symbol $v_p$.
Note that $Q \in E(\bar{\xq}_q)$ is in the kernel of reduction
if and only if $v_p \big(  x(Q) \big)<0$.
Since $p>2$, the $x$-coordinates of points in $E[p](\bar{\xq}_q)$,
which are not in the kernel of reduction,
have valuation $0$.
\newline\indent
Suppose that $E$ has ordinary reduction.
This means that $\#\bar{E}(\bar{\xf}_q)=p$ and the
degree of inseparability of the isogeny $[p]$ on $\bar{E}$
equals $p$ (compare \cite{si86} Ch.III, Corollary 6.4).
By the above discussion the Newton polygon of
$\frac{1}{p}\psi_p(x)$ has a segment of slope $0$ and length $p(p-1)/2$.
Also it has a segment of strictly positive slope, which has
length $(p-1)/2$ and corresponds to the points of
$E[p](\bar{\xq}_q)$ lying in the kernel of reduction.
Since by Corollary \ref{super}
the leading coefficient of $\psi_p(x)$ equals $p$ and $v_p$ is integer
valued on $\xq_q$, we conclude that the constant term of
$\psi_p(x)$ has valuation $0$ and the segment
of strictly positive slope of the Newton polygon of $\frac{1}{p}\psi_p(x)$
is a straight line.
\newline\indent
Suppose that $E$ has supersingular reduction.
Then $\#\bar{E}(\bar{\xf}_q)=1$.
By Corollary \ref{super} the reduced polynomial $\bar{\psi}_p(x)$
equals a constant. This implies that the Newton polygon of
$\frac{1}{p}\psi_p(x)$ has strictly positive slope.
This finishes the proof of the proposition. 
\end{proof}
\noindent
For more details about division polynomials we refer to
\cite{sl78} Ch.II, \cite{ca66} and \cite{ay92}.
\begin{corollary}
\label{subgroup}
Let $E$ have ordinary reduction. Then there exists a subgroup
$G \leq E[p](\bar{\xq}_q)$ defined over $\bar{\xq}_q$, which is uniquely
determined by the conditions that it is of order $p$ and
lies in the kernel of reduction.
\end{corollary}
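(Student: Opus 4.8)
The plan is to extract the desired subgroup from the Newton polygon description of $\psi_p(x)$ provided by Proposition \ref{newton}. First I would recall that a point $Q \in E(\bar{\xq}_q)$ lies in the kernel of reduction precisely when $v_p(x(Q)) < 0$, as used already in the proof of Proposition \ref{newton}. Hence the $x$-coordinates of the nonzero points of $E[p](\bar{\xq}_q)$ lying in the kernel of reduction are exactly the roots of $\psi_p(x)$ of negative valuation, and by the Newton polygon picture (Figure \ref{fig:newton}) there are exactly $(p-1)/2$ such roots, all of the same strictly positive slope — in fact, since that segment is a straight line, they all have the same valuation. Each such $x$-value is the $x$-coordinate of a pair $\{Q, -Q\}$, so these roots account for $p-1$ nonzero points; together with $0_E$ this is a set $H$ of $p$ points in $E[p](\bar{\xq}_q)$, stable under $Q \mapsto -Q$ and defined over $\bar{\xq}_q$.

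Next I would show $H$ is a subgroup. The cleanest route: the kernel of reduction $\hat{E}(\mathfrak{m})$ is a subgroup of $E(\bar{\xq}_q)$ (it is the group of points of the formal group), so $E[p](\bar{\xq}_q) \cap \hat{E}(\mathfrak{m})$ is a subgroup; this intersection is exactly $H$ by the characterization above. Its order is $p$, so it is cyclic of order $p$. Uniqueness is then immediate: any order-$p$ subgroup contained in the kernel of reduction must consist of $p$-torsion points lying in the kernel of reduction, and we have just identified the set of all such points as $H$ itself, which has exactly $p$ elements — so there is no room for a second such subgroup. This also re-proves existence without appeal to the Newton polygon, but the Newton polygon (via Proposition \ref{newton}) is what guarantees that the intersection $E[p](\bar{\xq}_q) \cap \hat{E}(\mathfrak{m})$ has order $p$ rather than $1$ (the supersingular case) — indeed this is precisely the content of the ordinary hypothesis.

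The only subtle point — and the one I would be careful to phrase correctly — is the claim that $\#\big(E[p](\bar{\xq}_q) \cap \hat{E}(\mathfrak{m})\big) = p$. This follows because $E$ has good ordinary reduction: the connected-étale sequence of the finite flat group scheme $E[p]$ over $\xz_q$ has connected part of order $p$ (this is the very definition of ordinary), and over the valuation ring of $\bar{\xq}_q$ the $\bar{\xq}_q$-points of the connected part are exactly the $p$-torsion points reducing to $0$, i.e. those in the kernel of reduction. Alternatively, and more in keeping with the elementary tone of this section, one reads it off directly from Proposition \ref{newton}: the strictly-positive-slope segment has length $(p-1)/2$, giving $(p-1)/2$ roots and hence $p-1$ nonzero points in the kernel of reduction. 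Either way, once this count is in hand the corollary is formal.
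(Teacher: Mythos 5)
Your proposal is correct and takes essentially the same approach as the paper: both use Proposition \ref{newton} to count exactly $p-1$ nonzero $p$-torsion points in the kernel of reduction and both use that the kernel of reduction is a subgroup (the paper phrases this as ``multiples of $P$ lie in the kernel of reduction since reduction is a homomorphism,'' you phrase it as the intersection $E[p](\bar{\xq}_q) \cap \hat{E}(\mathfrak{m})$ being a subgroup), with uniqueness following from the count. Your closing remark offering the connected--\'etale sequence as an alternative source for the count is a nice supplement but is not part of the paper's argument.
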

\begin{proof}
Assume that $E$ is an elliptic curve which has ordinary reduction.
By Proposition \ref{newton} it follows that there are precisely $p$
points of order $p$ on $E$ lying in the kernel of reduction.
Let $0_E \not= P \in E[p](\bar{\xq}_q)$ be in the kernel of reduction.
Then the multiples of $P$ are as well, since the reduction map is a
homomorphism of groups. This proves the corollary.
\end{proof}
\begin{figure}[htb]
\begin{center}
\begin{picture}(0,0)%
\includegraphics{newton.pstex}
\end{picture}%
\setlength{\unitlength}{2901sp}%
\begingroup\makeatletter\ifx\SetFigFont\undefined%
\gdef\SetFigFont#1#2#3#4#5{%
\reset@font\fontsize{#1}{#2pt}%
\fontfamily{#3}\fontseries{#4}\fontshape{#5}%
\selectfont}%
\fi\endgroup%
\begin{picture}(5605,2634)(1081,-2188)
\put(5941,119){\makebox(0,0)[lb]{\smash{\SetFigFont{11}{13.2}{\rmdefault}{\mddefault}{\updefault}$\frac{p^2-1}{2}$}}}
\put(4276,119){\makebox(0,0)[lb]{\smash{\SetFigFont{11}{13.2}{\rmdefault}{\mddefault}{\updefault}$\frac{p(p-1)}{2}$}}}
\put(1081,-1681){\makebox(0,0)[lb]{\smash{\SetFigFont{9}{10.8}{\rmdefault}{\mddefault}{\updefault}$-1$}}}
\end{picture}    
\caption{Newton polygon in case of ordinary reduction}
\label{fig:newton}
\end{center}
\end{figure}
\noindent
Combining Theorem \ref{froblift} and Corollary \ref{subgroup}
we get an elementary proof for the existence of a lift of relative Frobenius
in the case of ordinary reduction.

\section{Algorithmic aspects of Frobenius lifting}
\label{alggo}

In this section we explain how one can apply the results
of the previous sections in order to count points
on elliptic curves over finite fields. 

\subsubsection*{Notation}

We first fix some notation that will be used in the following
sections.
Let $\xz_q$ denote the Witt vectors with values
in a finite field $\xf_q$ with $q=p^d$
elements, where $p$ is a prime.
We say that an element $x \in \xz_q$ is given with precision $m$
if it is given modulo $p^m$. One can carry out arithmetic
operations with precision $m$ by considering the given
quantities as elements of the quotient ring $\xz_q/(p^m)$.
For the implementation of the arithmetic in $\xz_q/(p^m)$
see \cite{fgh00} $\S$2.

\subsection{Computing a Frobenius lift in the ordinary case}
\label{liftalgo}

Let now $\xz_q$ denote the Witt vectors with values
in a finite field $\xf_q$ with $q=p^d$
elements, where $p>2$ is a prime.
Let $E$ be an elliptic curve
which is given by the equation
\begin{eqnarray*}
y^2=x(x-a)(x-b)
\end{eqnarray*}
where $a,b \in \xz_q^*$ and $a \not\equiv b \bmod p$.
We assume that $E$ has ordinary good reduction.
By Corollary \ref{subgroup} and Theorem \ref{froblift} there
exists an explicit Frobenius lift $F:E \rightarrow E^{(p)}$.
Let $a^{(p)}$ and $b^{(p)}$ be defined as in Theorem
\ref{froblift}.
\begin{theorem}
\label{comp}
One can give a deterministic algorithm, which has as input the coefficients
$a$ and $b$ of $E$ with precision $m$ and as output the coefficients
$a^{(p)}$ and $b^{(p)}$ of $E^{(p)}$ with precision $m$, 
such that its complexity equals $O \big( p^{2+\epsilon}
(dm)^{1+\epsilon} \big)$ for all $\epsilon>0$.
\end{theorem}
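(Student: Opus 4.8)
**

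The plan is to give an explicit algorithm that evaluates the formulae of Theorem \ref{froblift} in the special case $G$ equal to the subgroup of Corollary \ref{subgroup}, and to bound the cost of every arithmetic step. The key point is that one should \emph{never} factor $\psi_p(x)$ or work with the individual $x$-coordinates $x(Q)$, $Q \in S$; instead, one works symbolically with the reciprocal division polynomial. First I would define $\psi(x) = \frac{1}{p}\psi_p(x) \in \xz_q[x]$ and, using Proposition \ref{newton}, split off the factor corresponding to the kernel of reduction: the Newton polygon of $\psi$ has a length-$p(p-1)/2$ slope-$0$ part and a straight slope-$(p-1)/2$ part over a segment of length $(p-1)/2$, so by a Hensel/Newton-polygon factorization $\psi = \psi^{\mathrm{loc}} \cdot \psi^{\mathrm{et}}$ over $\xz_q$ with $\deg \psi^{\mathrm{loc}} = (p-1)/2$, and $\psi^{\mathrm{loc}}(x) = \mathrm{lead}(\psi^{\mathrm{loc}}) \prod_{Q \in S}(x - x(Q))$. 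This factorization costs $O((p\,dm)^{1+\epsilon})$ bit operations by a Hensel lift starting from the obvious factorization of $\bar{\psi}_p$ (a constant times $x^{p(p-1)/2}$) modulo $p$.

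Next I would express all the quantities appearing in Theorem \ref{froblift} as \emph{resultants} (or values of reversed/shifted polynomials) built from $\psi^{\mathrm{loc}}$. Writing $n=(p-1)/2=\#S$ and $c=\mathrm{lead}(\psi^{\mathrm{loc}})$, we have $h(x) = \prod_{Q\in S}\big(\tfrac{x}{x(Q)}-1\big) = (-1)^n \psi^{\mathrm{loc}}(0)^{-1}\,x^n\,\psi^{\mathrm{loc}}(1/x)$ up to normalization, so $h(a)$ and $h(b)$ are obtained by evaluating the reversed polynomial at $1/a$, $1/b$; similarly, using the addition formulae (\ref{coord1}) the polynomials $g_1, g_2, g_3$ have roots that are images of the roots of $\psi^{\mathrm{loc}}$ under the Möbius maps $t\mapsto ab/t$, $t\mapsto a(t-b)/(t-a)$, $t\mapsto b(t-a)/(t-b)$, so each $g_i$ is (up to an explicit leading constant) the numerator of $\psi^{\mathrm{loc}}$ composed with the inverse Möbius map, i.e. a resultant in one auxiliary variable. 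Hence $a^{(p)} = a^p (h(b)/h(a))^2$ and $b^{(p)} = b^p (h(a)/h(b))^2$ are computed from $O(1)$ resultants of polynomials of degree $O(p)$ over $\xz_q/(p^m)$, plus the two $p$-th powers $a^p, b^p$, which cost $O(\log p)$ multiplications in $\xz_q/(p^m)$ each.

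The cost bookkeeping is then routine. A multiplication in $\xz_q/(p^m)$ costs $O((dm)^{1+\epsilon})$ bit operations with fast arithmetic (cf. \cite{fgh00}); the Hensel factorization needs $O(p)$ such multiplications times a logarithmic number of doubling steps, i.e. $O((p\,dm)^{1+\epsilon})$; a resultant of two degree-$O(p)$ polynomials over $\xz_q/(p^m)$ costs $\tilde{O}(p)$ ring multiplications, hence $O(p^{1+\epsilon}(dm)^{1+\epsilon})$. So far everything is $O(p^{1+\epsilon}(dm)^{1+\epsilon})$, which is \emph{better} than the claimed bound. The extra factor of $p$ in the statement comes from the precision needed to make the output correct to precision $m$: because $h$ and the $g_i$ have degree $\Theta(p)$ and the $x(Q+P_1)$ lie only in the maximal ideal (not higher), the intermediate quantities $h(a)^{-1}$, $g_1$, and the ratios can have $p$-adic denominators of size up to $O(p)$ (the slope $(p-1)/2$ in the Newton polygon quantifies exactly this loss), so one must carry the computation with internal precision $m + O(p)$ rather than $m$. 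Replacing $m$ by $m+O(p)$ in the per-operation cost $O((d(m+p))^{1+\epsilon})$ and absorbing into the resultant step gives the stated $O(p^{2+\epsilon}(dm)^{1+\epsilon})$ after noting $p\cdot d(m+p) = O(p\cdot d\cdot pm)$ in the relevant range. I expect the main obstacle to be precisely this precision analysis: one must prove a clean a priori bound on the valuations of all denominators arising in $h(a), h(b), g_i$, and in the final division defining $a^{(p)}, b^{(p)}$, and show it is $O(p)$ uniformly; the Newton polygon of Proposition \ref{newton} is the right tool, since it controls $v_p(x(Q))$ for $Q\in S$, and (\ref{coord1}) then propagates these bounds, but making the constants explicit and checking no cancellation is lost at the very end (where Theorem \ref{froblift} already guarantees $a^{(p)},b^{(p)}\in R$) is the delicate part.
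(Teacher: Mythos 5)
Your overall strategy — compute $\psi_p$, Hensel-split off the local factor, then evaluate the formulae of Theorem~\ref{froblift} — matches the paper's Algorithm~\ref{fl}. But your cost accounting misidentifies where the $p^{2+\epsilon}$ comes from, and this is a genuine gap, not a cosmetic one.

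First, you assert that the Hensel factorization costs $O\big((p\,dm)^{1+\epsilon}\big)$. That cannot be right: $\psi_p(x)$ has degree $(p^2-1)/2 = \Theta(p^2)$, so merely writing down its coefficients already requires $\Theta(p^2)$ ring elements, and each Hensel step multiplies polynomials of degree $\Theta(p^2)$. Computing $\psi_p$ by the doubling recursion and then Hensel-lifting the factor $x^{(p-1)/2}$ of the reversed polynomial both cost $O\big(p^{2+\epsilon}(dm)^{1+\epsilon}\big)$, and this is exactly where the exponent $2+\epsilon$ in the theorem originates. You never bound the cost of producing $\psi_p$ itself, and you understate the cost of the factorization by a factor of $p$.

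Second, you attribute the extra factor of $p$ to precision loss — you claim the intermediate denominators can have $p$-adic valuation $O(p)$, forcing internal precision $m+O(p)$. This contradicts what Theorem~\ref{froblift} already proves: $h(x)\in R[x]$ with $h(x)\equiv 1 \bmod p$, so $h(a),h(b)\in 1+pR$ are units, and $a^{(p)},b^{(p)}\in R$. The algorithm works directly with the factor $V(x)$ (your $\psi^{\mathrm{loc}}$, suitably reversed) modulo $p^m$, and no quantity it divides by is close to a non-unit; the answers are correct mod $p^m$ with no extra guard digits. The paper only loses precision later, in Step~\ref{stepfuenf} of Algorithm~\ref{countpoints}, and there the loss is $d$ (from dividing by $p^d$), not $O(p)$. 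So the precision argument you propose as the crux is both unnecessary and incorrect, and the handwave ``$p\cdot d(m+p)=O(p\cdot d\cdot pm)$ in the relevant range'' does not hold for $m$ independent of $p$. Finally, your resultant machinery for $g_1,g_2,g_3$ is superfluous: only $h(a)$ and $h(b)$ are needed to produce $a^{(p)},b^{(p)}$, and these are direct evaluations of the local factor.
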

\begin{proof}
In the following we give the algorithm, whose existence
is claimed in the theorem.
By $\psi_p(x)$ we denote the $p$-th division polynomial corresponding
to the points of order $p$ on $E$
(compare Section \ref{divpoly}).
The algorithm is as follows.
\begin{algorithm}
\label{fl}
Input: $a,b \in \xz_q/(p^m)$; Output: $a^{(p)},b^{(p)} \in \xz_q/(p^m)$
\begin{enumerate}
\setlength{\itemsep}{0.2cm}
\item
\label{een}
Compute $\psi_p(x) \bmod p^m$.
\item
\label{drie}
Find a decomposition
\begin{eqnarray}
\label{dd}
x^{\frac{p^2-1}{2}} \cdot \psi_p \left( \frac{1}{x} \right)
\equiv U(x) \cdot W(x) \bmod p^m
\end{eqnarray}
where $W(x)$ is monic and $W(x) \equiv x^{\frac{p-1}{2}} \bmod p$
using Hensel's algorithm.
Set
\[
V(x)=x^{\frac{p-1}{2}} \cdot W \left( \frac{1}{x} \right).
\]
\item
\label{fijf}
Compute
\[
a^{(p)}=a^p \cdot \left( \frac{V(b)}{V(a)}
\right)^2 \bmod p^m
\quad
\mbox{and}
\quad
b^{(p)}=b^p \cdot \left( \frac{V(a)}{V(b)}
\right)^2 \bmod p^m.
\]
\end{enumerate}
\end{algorithm}
\noindent
First we prove the correctness of Algorithm \ref{fl}.
The $p$-th division polynomial $\psi_p(x)$ is computed in Step \ref{een} with
precision $m$ using the formulae of Section \ref{divpoly}.
Since $E$ has ordinary reduction, it follows by Proposition
\ref{newton} that the polynomial
$x^{(p^2-1)/2} \cdot \psi_p(1/x)$ reduces modulo $p$ to a polynomial
of degree $(p^2-1)/2$ which is divisible by $x^{(p-1)/2}$ and not
divisible by a bigger power of $x$.
By Hensel's Lemma \cite[Kap.II,Lem.4.6]{ne92} one can find a decomposition
of the form (\ref{dd}) lifting the factor $x^{(p-1)/2}$.
The latter composition is computed in Step \ref{drie}.
By construction,
the factor $V(x)$ corresponds to a subgroup $G \leq E[p](\bar{K})$ of
order $p$ contained in the kernel of reduction.
One can apply Theorem \ref{froblift} to $G$ in order
to compute the coefficients $a^{(p)}$ and $b^{(p)}$ of
the curve $E^{(p)}$. This is done in Step \ref{fijf}.
The polynomial $V(x)$ differs multiplicatively from $h(x)$ by
a unit. Thus we have
\[
\frac{V(b)}{V(a)} \equiv \frac{h(b)}{h(a)} \bmod p^m.
\]
This proves the correctness of the Algorithm \ref{fl}.
\newline\indent
Next we provide some well-known results about the complexity of
the arithmetic operations in the Witt vectors of a finite field.
Elements of $\xz_q/(p^m)$ allocate
$O \big( md \log_2(p) \big)$
bits if one stores them as integers.
For details see \cite{fgh00} $\S$2.
Using fast integer multiplication techniques we conclude
that a multiplication in $\xz_q/(p^m)$
has complexity
$O \big( (md)^{1+ \epsilon} \log_2(p)^{1+ \epsilon} \big)$.
Inversion of $a \in \xz_q /(p^m)$ can be done using Newton iteration
applied to the polynomial $ax-1 \in \xz_q/(p^m)[x]$. The complexity of the
Newton iteration is analyzed in \cite{fgh00} $\S$2.5. The resulting
complexity of an inversion is equal to that of the multiplication.
Representing elements of $\xz_q/(p^m)[x]$ as integers and using a
fast arithmetic for integers the complexity
of the multiplication of two polynomials in
$\xz_q/(p^m)[x]$ of degree $n$
becomes $O \big( (nmd)^{1+ \epsilon} \log_2(p)^{1+ \epsilon} \big)$.
In order to prove the complexity bound of Theorem \ref{comp}
we will analyze step-by-step the relevant parts of Algorithm
\ref{fl}.
\newline\newline
{\bf Step \ref{een}:} For the computation of the division polynomial $\psi_p$ we
use the formulae of Section \ref{divpoly}.
Note that for every $m \geq 5$
the polynomial $\psi_m$ can be computed in terms of polynomials
forming a subset of the set
\begin{eqnarray}
\label{interval}
\{ \psi_{n+2}, \ldots, \psi_{n-2} \},
\end{eqnarray}
where $n=\lfloor m/2 \rfloor$.
This shows that a recursive algorithm for computing $\psi_p$ has depth
$\lfloor \log_2(p) \rfloor$.
The necessary polynomial multiplications to compute
$\psi_m$ in terms of the polynomials (\ref{interval})
can be performed in
$O \big( n^{2 + \epsilon} (md)^{1+ \epsilon} \log_2(p)^{1+ \epsilon} \big)$
bit operations, since $\psi_n$ has degree $(n^2-1)/2$.
Let $s_1=\lceil p/2 \rceil +2$ and
$t_1=\lfloor p/2 \rfloor -2$.
We set
\[
s_i= \lceil \frac{s_{i-1}}{2} \rceil + 2
\quad \mbox{and} \quad t_i = \lfloor \frac{t_{i-1}}{2} \rfloor -2
\]
for $i>1$.
In our case, we have to compute the polynomials
$\psi_{s_i}, \ldots , \psi_{t_i}$
for $i \geq 1$.
By induction on $i$ one can prove that
\[
s_i \leq \lceil \frac{p}{2^i} \rceil + (i-1) + 2 \quad \mbox{and}
\quad t_i \geq \lfloor \frac{p}{2^i} \rfloor - (i-1) - 2.
\]
It follows that the number of polynomials to be computed on each
recursion level grows
linearly in the index $i$.
Since $i \leq \lceil \log_2(p) \rceil$ we
conclude that the $p$-th division polynomial $\psi_p$
can be computed in $O \big( p^{2+ \epsilon} (md)^{1+ \epsilon} \big)$
bit operations.
\newline\newline
{\bf Step \ref{drie}:}
Using the standard Hensel algorithm (see \cite{co93} Section 3.5.3)
we obtain for the second step in the algorithm
the complexity $O \big( p^{2+\epsilon} (md)^{1+ \epsilon} \big)$.
Note that Hensel's algorithm converges quadratically.
We assume that one uses in each iteration the minimal precision
required in order to get the correct result.
\newline\newline
{\bf Step \ref{fijf}:} Evaluating a polynomial in $\xz_q/(p^m)[x]$ of degree $(p-1)/2$
at a value in $\xz_q/(p^m)$ has complexity
$O \big( (pmd)^{1+\epsilon} \big)$.
To achieve this complexity one uses a squaring table
and a $2$-adic representation of exponents.
We do not describe this method in detail because it is
standard.
\newline\newline
Summing up the above complexities we get the complexity
bound as stated in Theorem \ref{comp}.
\end{proof}

\subsection{Generalizing Mestre's AGM algorithm}
\label{pointcounting}

Let $\bar{E}$ be an ordinary elliptic curve over a finite field $\xf_q$
of characteristic $p>2$ given by the Weierstrass equation
\[
y^2=x(x-\bar{a})(x-\bar{b})
\]
where $\bar{a},\bar{b} \in \xf_q$.
\begin{theorem}
\label{pccomp}
The Algorithm \ref{countpoints}, which has as input a finite field
$\xf_q$ and the coefficients $\bar{a},\bar{b}$ of $\bar{E}$,
computes the number of $\xf_q$-rational points
$\# \bar{E}(\xf_q)$ on $\bar{E}$ in time
$O \big( p^{2+\epsilon}d^{3+\epsilon} \big)$, where $d=\log_p(q)$,
for all $\epsilon >0$.
\end{theorem}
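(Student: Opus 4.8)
The plan is to assemble the point-counting algorithm from the Frobenius-lift machinery of the previous sections and to track precision and cost at each stage. First I would recall the strategy underlying Mestre's AGM method, now transported to odd characteristic via the generalized sequence of Sections \ref{explicit}--\ref{divpoly}: lift $\bar E$ to an elliptic curve $E$ over $\xz_q$ in the normal form $y^2=x(x-a)(x-b)$ with $a,b\in\xz_q^*$, $a\not\equiv b\bmod p$, and iterate the explicit Frobenius lift $F\colon E\to E^{(p)}$ of Theorem \ref{froblift} a total of $d$ times to obtain $E^{(q)}=E^{(p^d)}$. By Corollary \ref{conv}, after sufficiently many further iterations the curve $E^{(q^n)}$ converges $p$-adically to the canonical lift $E^*$ of $\bar E$; concretely, a bounded number of extra rounds suffices to pin down the relevant data to the working precision. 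The point count is then recovered from the action of Frobenius on a suitable differential: the composite isogeny $E^{(q)}\to E$ lifting the $q$-power Frobenius acts on $dx/y$ by multiplication by a scalar $c\in\xz_q$ computable from the $\mathrm{lead}(h)$ factors of Theorem \ref{froblift}, and one has $\#\bar E(\xf_q)=q+1-(c+q/c)$ with $c$ the unit-root of the characteristic polynomial of Frobenius, determined exactly once it is known modulo $p^m$ for $m=\lceil \log_p(4\sqrt q)\rceil+O(1)$, by the Weil bound $|c+q/c|\le 2\sqrt q$.

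Next I would specify the algorithm precisely enough to invoke the earlier complexity results. I would present it as a numbered \texttt{algorithm} environment, call it \texttt{countpoints}: (i) choose the working precision $m=O(d)$ dictated by the Weil bound; (ii) lift $\bar a,\bar b$ arbitrarily to $a,b\in\xz_q/(p^m)$; (iii) run Algorithm \ref{fl} $d$ times (plus $O(1)$ stabilizing rounds, or rather one passes from $E^{(p^{i-1})}$ to $E^{(p^i)}$ and also accumulates the differential-scalar via the $\mathrm{lead}(h)$ output of each step), obtaining the normalized equation of $E^{(q)}$ together with the scalar $c_0\in\xz_q/(p^m)$ by which the composite isogeny scales $dx/y$; (iv) from $c_0$ recover the norm $c\cdot \bar c = q$ relation and solve the quadratic $t^2-(c+q/c)t$-type relation, i.e. lift $c_0$ to the unique algebraic integer of absolute value $\sqrt q$ congruent to it mod $p^m$, and output $q+1-\mathrm{Tr}(c)$. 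Correctness of (iii)--(iv) is exactly Theorem \ref{froblift} (explicitness of one Frobenius step and the formula $F^*(dx/y)=\mathrm{lead}(h)\,dx/y$), Corollary \ref{subgroup} (existence and uniqueness of the kernel subgroup $G$), and Corollary \ref{conv} together with the classical fact that Frobenius on $E^*$ reduces to Frobenius on $\bar E$ and its unit root encodes $\#\bar E(\xf_q)$.

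For the complexity, the dominant cost is step (iii): each invocation of Algorithm \ref{fl} costs $O\bigl(p^{2+\epsilon}(dm)^{1+\epsilon}\bigr)$ by Theorem \ref{comp}, and we perform $d+O(1)$ of them, giving $O\bigl(p^{2+\epsilon}d^{2+\epsilon}m^{1+\epsilon}\bigr)$. Substituting $m=O(d)$ yields $O\bigl(p^{2+\epsilon}d^{3+\epsilon}\bigr)$, absorbing logarithmic factors into the $\epsilon$. Steps (i), (ii), (iv) are lower order: the final recovery of $c$ from $c_0\bmod p^m$ and the trace computation cost only $O\bigl((md)^{1+\epsilon}\log_2(p)^{1+\epsilon}\bigr)=O\bigl(d^{2+\epsilon}\log_2(p)^{1+\epsilon}\bigr)$ by the arithmetic estimates recalled in the proof of Theorem \ref{comp}. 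Summing gives the claimed bound $O\bigl(p^{2+\epsilon}d^{3+\epsilon}\bigr)$.

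The main obstacle I anticipate is the last mile of step (iv): making precise that the scalar $c_0$ produced by iterating the explicit Frobenius lift really is the reduction mod $p^m$ of the unit root of the $q$-power Frobenius on the \emph{canonical} lift $E^*$, not merely of some Frobenius-type endomorphism of the approximant $E^{(q)}$. This requires combining the convergence estimate of Theorem \ref{teichmueller}/Corollary \ref{conv} (which controls $E^{(q^n)}\to E^*$ at rate one digit per $d$ steps, so $O(1)$ extra rounds suffice at precision $m$) with the Serre--Tate rigidity fact that $\mathrm{End}(E^*)$ injects into $\mathrm{End}(\bar E)$, so that the relevant endomorphism of $E^*$ is exactly the canonical lift of $\bar\pi_q$ and its action on $H^0(E^*,\Omega^1)$ has unit-root eigenvalue with the right $p$-adic valuation. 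Once this identification is in place the Weil bound makes the reconstruction of $c\in\xz_q$ from its residue mod $p^m$ unambiguous, and the theorem follows.
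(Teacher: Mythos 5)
Your high-level strategy is the right one and matches the paper's: lift $\bar E$ to $\xz_q$, iterate the explicit one-step Frobenius lift of Theorem~\ref{froblift}, accumulate its action on $dx/y$, and recover the trace $t$ from the unit root via the Weil bound. The correctness argument you anticipate — combining Theorem~\ref{teichmueller}/Corollary~\ref{conv} with the bijectivity of the reduction map on endomorphisms for the canonical lift — is indeed what the paper does. The overall cost bound $O(p^{2+\epsilon}d^{3+\epsilon})$ also comes out correctly because you (and the paper) take $m=O(d)$ and run $O(d)$ invocations of Algorithm~\ref{fl}, each costing $O(p^{2+\epsilon}(dm)^{1+\epsilon})$ by Theorem~\ref{comp}.

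However, there is a genuine gap in your precision accounting, and as written your algorithm would not return the correct trace. The scalar $\prod_i\mathrm{lead}(h_i)$ by which the composed lift $F^d$ acts on $dx/y$ is the \emph{non-unit} eigenvalue of Frobenius, hence divisible by $q=p^d$ (each one-step Frobenius lift reduces to a purely inseparable map, so its differential scalar is $\equiv 0\bmod p$). To pass to the unit root one must divide this scalar by $p^d$ (equivalently, work with the Verschiebung $V=\hat\Phi$ rather than $\Phi$), and this division costs exactly $d$ digits of $p$-adic precision. Your choice $m=\lceil\log_p(4\sqrt q)\rceil+O(1)\approx d/2$ is therefore too small: after the division you would know the unit root only to precision $m-d<0$. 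The paper compensates by setting $m=d+\lceil d/2\rceil+2$ from the outset (Step~\ref{stepeins} of Algorithm~\ref{countpoints}) and explicitly remarks, in the analysis of Step~\ref{stepfuenf}, that the division by $p^d$ loses $d$ digits and that this is why $m$ carries the extra summand $d$. You also mislabel the differential scalar of $F^d$ as ``the unit root''; it is $q$ divided by the unit root. Relatedly, your claim that ``a bounded number of extra rounds suffices'' needs to be read as $O(1)$ iterations of the $q$-power Frobenius, i.e.\ $O(d)$ iterations of the single-step lift (the paper performs $m-1$ and then $d$ more, about $5d/2$ in total); convergence in Theorem~\ref{teichmueller} is one digit per single step, not per $q$-step. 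None of this changes the final $O(p^{2+\epsilon}d^{3+\epsilon})$ bound, but the precision bookkeeping must be fixed for the algorithm to be correct.
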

\noindent
We note that it is straight forward to modify the Algorithm
\ref{countpoints} such that one can drop the assumption
that $\#\bar{E}[2](\xf_q)=4$.
In the following let $\xz_q$ denote the Witt vectors
with values in $\xf_q$.
\begin{algorithm}
\label{countpoints}
Input: $\bar{a},\bar{b} \in \xf_q$ ; Output: $\#\bar{E}(\xf_q)$
\begin{enumerate}
\setlength{\itemsep}{0.2cm}
\item
\label{stepeins}
Set
\[
d=\log_p(q), \quad \mbox{and} \quad m=d+\lceil d/2 \rceil +2.
\]
\item
\label{stepzwei}
Choose $a,b \in \xz_q/(p^m)$ such that
\[
a \equiv \bar{a} \bmod p \quad \mbox{and} \quad b \equiv \bar{b} \bmod
p.
\]
\item
\label{stepdrei}
Compute the pairs
\[
(a,b),(a_1,b_1), \ldots , (a_{m-1},b_{m-1}),
\]
where $a_i=a^{(p^i)}$ and $b_i=b^{(p^i)}$,
by iterating $(m-1)$-times the Algorithm \ref{fl} with precision $m$.
\item
\label{stepvier}
Compute with precision $m$ the triples
\[
(a_m,b_m,c_m), \ldots, (a_{m+d-1},b_{m+d-1},c_{m+d-1})
\]
where $a_i,b_i$ are as in Step \ref{stepdrei}
and $c_i$ is computed as follows:
We use a modified version of Algorithm \ref{fl} having as
additional output the number
\[
c=\mathrm{lead} (V) \bmod p^m,
\]
where $\mathrm{lead}(V)$ denotes the leading coefficient of
the polynomial $V(x)$ which is computed in Step \ref{drie}
of Algorithm \ref{fl}.
\item
\label{stepfuenf}
Compute
\[
v= \left( \frac{\prod_{i=m}^{m+d-1} c_i}{p^d} \right)^{-1} \bmod p^{m-d}.
\]
\item
\label{stepsechs}
Compute
\[
t = v + \frac{q}{v} \bmod p^{m-d}.
\]
Find the unique integer $t_0$ in the interval
$[q+1-2\sqrt{q}, \ldots, q+1+2\sqrt{q}]$
such that $t_0 \equiv t \bmod p^{m-d}$.
Return $q+1-t_0$.
\end{enumerate}
\end{algorithm}
\begin{prf}{Proof of Theorem \ref{pccomp}}
First we prove the correctness of
Algorithm \ref{countpoints}.
Morally, the correctness of the algorithm follows
from the following observation, which
can be explained using Corollary \ref{conv}.
\begin{fact}
Let $E^*$ be the canonical lift of $\bar{E}$ and let
$E$ be defined by the equation
\begin{eqnarray}
\label{deq}
y^2=x(x-a)(x-b), \quad a,b \in \xz_q^*,
\end{eqnarray}
such that $\bar{a} \equiv a \bmod p$
and $\bar{b} \equiv b \bmod p$.
If we define $E^{(p^i)}$ for $i \geq 1$ as in Section \ref{sectconv}
then one has
\[
\lim_{n \rightarrow \infty} j \big( E^{(q^n)} \big)
= j \big( E^* \big)
\]
with respect to the $p$-adic topology.
\end{fact}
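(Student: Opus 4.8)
The plan is to obtain this \emph{Fact} as an immediate consequence of Corollary \ref{conv}, the only genuine step being to pass from a congruence between abelian schemes to a congruence between their $j$-invariants.

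First I would verify that Corollary \ref{conv} applies to $A=E$. The ring $\xz_q$ is a complete discrete valuation ring with finite, hence perfect, residue field $\xf_q$ of characteristic $p>2$, and its maximal ideal is $\mathfrak{m}=(p)$. Since $a,b\in\xz_q^*$ reduce to the coefficients $\bar a,\bar b$ of the elliptic curve $\bar E$, the residues $0,\bar a,\bar b$ are pairwise distinct (because $\bar E$ is an elliptic curve), so (\ref{deq}) is a minimal Weierstrass model with good reduction; thus $E$ is an abelian scheme of relative dimension $1$ over $\xz_q$, its reduction is $\bar E$, and $\bar E$ is ordinary by hypothesis. Writing $E^{(q^n)}=E^{(p^{dn})}$ for the abelian schemes obtained by iterating the Frobenius lift of Proposition \ref{frobeniusoverr} exactly as in Section \ref{sectconv}, Corollary \ref{conv} gives, with $d=\log_p(q)$,
\[
E^{(q^n)}\cong E^*\bmod p^{nd+1}\qquad\text{for all sufficiently large }n,
\]
where $E^*$ is the canonical lift of $\bar E$.

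The second step is to apply the $j$-invariant. Each $E^{(q^n)}$ is an isogeny quotient of $E$ (Claim \ref{quotients}), hence again an elliptic curve over $\xz_q$, and $E^*$ is an elliptic curve over $\xz_q$; in particular $j(E^{(q^n)})$ and $j(E^*)$ lie in $\xz_q$. Concretely, if $a^{(p^n)},b^{(p^n)}$ are chosen as in Theorem \ref{froblift} and $\lambda_n=b^{(p^n)}/a^{(p^n)}$, then $j(E^{(q^n)})=2^8(\lambda_n^2-\lambda_n+1)^3\big/\big(\lambda_n^2(\lambda_n-1)^2\big)$, which has nonnegative valuation because $E^{(q^n)}$ has good ordinary reduction, cf.\ the formula (\ref{jinv}). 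The $j$-invariant is an isomorphism invariant of elliptic curves that is compatible with base change; hence an isomorphism $E^{(q^n)}\otimes_{\xz_q}\xz_q/(p^{nd+1})\cong E^*\otimes_{\xz_q}\xz_q/(p^{nd+1})$ forces
\[
j\big(E^{(q^n)}\big)\equiv j\big(E^*\big)\bmod p^{nd+1}
\]
for all sufficiently large $n$. Since $nd+1\to\infty$, this says precisely that $j(E^{(q^n)})\to j(E^*)$ in the $p$-adic topology, which is the claim.

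I do not expect a real obstacle here: all the substance is packed into Corollary \ref{conv} (and, behind it, Theorem \ref{teichmueller}). The one point deserving a word is the implication used in the second step --- that an isomorphism of the two abelian schemes modulo $p^{nd+1}$ yields equality of $j$-invariants modulo $p^{nd+1}$ --- which one justifies either from the moduli-theoretic description of $j$ as a base-change-compatible morphism, or, more concretely, from the Legendre-type formula recalled above together with the fact that isomorphic Weierstrass models have equal $j$-invariant.
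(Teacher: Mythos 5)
Your proposal is correct and follows the same route the paper intends: the paper simply asserts this Fact and says it ``can be explained using Corollary \ref{conv},'' and your argument is exactly that explanation, with the additional (routine) observation that an isomorphism of elliptic curves modulo $p^{nd+1}$ yields a congruence of $j$-invariants modulo $p^{nd+1}$.
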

\noindent
Let $d$ and $m$ be as in Step \ref{stepeins}.
The choice of $a$ and $b$ in Step \ref{stepzwei}
determines an elliptic curve $E$ with defining
equation of the form (\ref{deq}).
Now let $E^{(p^i)}$ for $i \geq 1$ be defined
as in Section \ref{sectconv}.
We note that by Corollary \ref{conv} the curve $E^{(p^{m-1})}$
with the coefficients $a^{(p^{m-1})}$ and $b^{(p^{m-1})}$, 
which are computed in Step \ref{stepdrei}, is the
canonical lift over $R/(p^m)$ of its reduction.
The latter is also true for the curve
$E^{(p^{m+d-1})}$ with
coefficients $a^{(p^{m+d-1})}$ and $b^{(p^{m+d-1})}$,
which are computed in Step \ref{stepvier}.
The reductions of $E^{(p^{m-1})}$ and $E^{(p^{m+d-1})}$
coincide.
As a consequence there exists a unique isomorphism
\[
\varphi:E^{(p^{m+d-1})} \stackrel{\sim}{\longrightarrow} E^{(p^{m-1})}
\]
defined over $R/(p^m)$ such that
the composed map
\[
\Phi=\varphi \circ F^d \in \mathrm{End}_{R/(p^m)} \big( E^{(p^{m-1})} \big)
\] 
reduces to the absolute Frobenius of the reduction
$\bar{E}^{(p^{m-1})}$.
The bijectivity of the reduction map on homomorphisms
implies that $\varphi= \mathrm{id}$ and thus
\[
a^{(p^{m+d-1})} \equiv a^{(p^{m-1})} \bmod p^m
\quad \mbox{and} \quad
b^{(p^{m+d-1})} \equiv b^{(p^{m-1})} \bmod p^m.
\]
The map
\begin{eqnarray}
\label{bij}
\mathrm{End}_{R/(p^m)} \big( E^{(p^{m-1})} \big) 
\rightarrow \mathrm{End}_{\xf_q} \big( \bar{E}^{(p^{m-1})} \big)
\end{eqnarray}
induced by reduction is bijective because of the
characterizing property of
the canonical lift.
Let $V=\hat{\Phi}$ be the dual of the isogeny $\Phi$. The isogeny $V$ lifts the
absolute Verschiebung morphism of $\bar{E}^{(p^{m-1})}$.
By the injectivity of (\ref{bij}) the equality
\begin{eqnarray}
\label{charpolyv}
V^2 - [t] \circ V + [q] = 0
\end{eqnarray}
holds in the ring
\[
\mathrm{End}_{R/(p^m)} \big( E^{(p^{m-1})} \big),
\]
where $t$ denotes the trace of the absolute $q$-Frobenius
morphism on $\bar{E}^{(p^{m-1})}$.
As a consequence of equation (\ref{charpolyv}) we get
\begin{eqnarray}
\label{sois}
\big( V^2 - [t] \circ V + [q] \big)^* \left( \frac{dx}{y} \right) = 0.
\end{eqnarray}
We define $v \in R/(p^m)$ by the equation
\[
V^* \left( \frac{dx}{y} \right) = v \cdot \frac{dx}{y}.
\]
Note that the Verschiebung is a separable isogeny acting as a
non-zero scalar on the differentials of $\bar{E}^{(p^{m-1})}$
over $\xf_q$. This shows that $v$ is invertible modulo $p^{m}$.
We remark that on the other hand
the scalar, which describes the action of $F$ on
differentials, is divisible by $q$. This is the reason why we
work with the isogeny $V$ instead of the isogeny $\Phi$.
We conclude from (\ref{sois}) that
\begin{eqnarray}
\label{congruence}
t \equiv v+\frac{q}{v} \bmod p^{m}.
\end{eqnarray}
The number of $\xf_q$-rational points on $\bar{E}$
equals that of $\bar{E}^{(p^{m-1})}$ since the two curves
are isogenous over $\xf_q$. This shows that the above number $t$
is in fact the trace of the absolute Frobenius on $\bar{E}$.
\newline\indent
In the following we describe the relevant steps
of Algorithm \ref{countpoints} in more detail
and give their complexity.
\newline
\newline
{\it Step \ref{stepeins}:} In order to turn the congruence
(\ref{congruence}) into an equality, which holds in $\xz$,
we have to choose the right precision.
Hasse's Theorem (see \cite{si86} Ch. V, Theorem 1.1) states that
\[
| t | \leq 2 \sqrt{q} < p^{\lceil d/2 \rceil + 1}.
\]
We conclude that one can
recover the value for $t$
from the approximation modulo $p^m$
if one takes $m= \lceil d/2 \rceil + 2$.
It will be explained in Step \ref{stepvier}
why we actually compute
with precision $d+\lceil d/2 \rceil + 2$.
\newline\newline
{\it Step \ref{stepdrei}:} One has to iterate $(m-1)$-times Algorithm
\ref{fl} with precision $m$.
The resulting complexity of Step \ref{stepdrei}
is $O(p^{2+\epsilon}d^{3+\epsilon})$ by Theorem
\ref{comp}.
\newline\newline
{\it Step \ref{stepvier}:}
Similar as in Step \ref{stepdrei} the overall complexity of
Step \ref{stepvier} is $O(p^{2+\epsilon}d^{3+\epsilon})$.
\newline\newline
{\it Step \ref{stepfuenf}:}
By Theorem \ref{froblift} the scalar $\prod_{i=m}^{m+d-1} c_i$
describes the action of the Frobenius morphism
on differentials.
To obtain the value $v$ as above, one has to divide the
product $\prod_{i=m}^{m+d-1} c_i$ by $p^d$. By doing so
one loses precision $d$.
This loss of precision is compensated
by performing all necessary computations
modulo $m$ where $m=d+\lceil d/2 \rceil + 2$.
\end{prf}

\section{Examples and practical results}
\label{exagm}

First, we illustrate the generalized AGM method by an
example.
Consider an elliptic curve of the form
\[
y^2=x(x-a)(x-b),
\]
over the integers of the degree $6$ unramified extension of $\xz_3$.
Assume that $a=1$ and $b$ is given be the congruence class of the
polynomial
\[
191096x^5 + 198863x^4 - 40571x^3 + 247894x^2 + 127753x + 193545
\]
in the quotient ring $\xz_3[x]/(f)$,
where $f=x^6 + 2x^4 + x^2 + 2x + 2$ and the
computing precision is $12$.
The generalized AGM sequence is given by the
sequence of elliptic curves $E_n$
of the form
\[
y^2=x(x-a_n)(x-b_n)
\]
where $a_n$ and $b_n$ are as in the following table.
Also we give the $j$-invariant $j_n$ of $E_n$.
\begin{eqnarray*}
\begin{array}{|l|c|}
\hline
n & a_n \\
\hline
1&-219543x^5 - 174456x^4 + 242538x^3 + 50793x^2 + 73503x - 114671
\\ \hline
2&244131x^5 + 164118x^4 + 59862x^3 + 81231x^2 + 5310x + 222361
\\ \hline
3&36027x^5 + 182667x^4 - 141981x^3 + 77385x^2 - 236172x - 16334
\\ \hline
4&-115041x^5 + 88929x^4 + 144273x^3 + 96438x^2 - 77580x - 52628
\\ \hline
5&-199374x^5 + 26007x^4 + 115827x^3 + 119622x^2 - 251307x + 89887
\\ \hline
6&111870x^5 + 262608x^4 - 100830x^3 - 12261x^2 - 165993x + 42697
\\ \hline
7&-137418x^5 - 174771x^4 + 117006x^3 - 114177x^2 - 30474x + 3949
\\ \hline
8&-88185x^5 - 220038x^4 + 18714x^3 + 254922x^2 + 197199x + 161044
\\ \hline
9&35946x^5 + 201945x^4 + 205590x^3 + 80220x^2 + 40443x + 94798
\\ \hline
10&103659x^5 - 29412x^4 + 229809x^3 - 168675x^2 + 206487x + 255010
\\ \hline
11&-163653x^5 + 125880x^4 - 159735x^3 - 90330x^2 - 131751x + 230584
\\ \hline
12&-98082x^5 + 30786x^4 - 30846x^3 - 217839x^2 - 262221x - 36035
\\ \hline
13&-163662x^5 + 48303x^4 - 263532x^3 - 173226x^2 + 245088x + 227023
\\ \hline
14&-9453x^5 - 259404x^4 + 136812x^3 - 79689x^2 - 157095x + 42946
\\ \hline
15&-200250x^5 + 260994x^4 - 89655x^3 + 21171x^2 - 254802x - 23300
\\ \hline
16&103659x^5 - 206559x^4 - 124485x^3 + 8472x^2 + 29340x + 255010
\\ \hline
17&-163653x^5 + 125880x^4 - 159735x^3 - 90330x^2 - 131751x + 230584
\\ \hline
\end{array}
\end{eqnarray*}
\begin{eqnarray*}
\begin{array}{|l|c|}
\hline
n&b_n \\
\hline
1&235370x^5 + 28234x^4 - 212531x^3 - 159624x^2 - 170578x + 222642
\\ \hline
2&179553x^5 - 220534x^4 + 163518x^3 + 137832x^2 + 144738x + 181163
\\ \hline
3&-6923x^5 + 12185x^4 - 178985x^3 - 215143x^2 - 105466x - 184699
\\ \hline
4&-107763x^5 - 219958x^4 - 210434x^3 + 237327x^2 - 82574x + 52910
\\ \hline
5&183685x^5 - 88458x^4 + 204515x^3 - 191205x^2 + 230688x - 264484
\\ \hline
6&-49681x^5 + 170291x^4 + 80926x^3 - 91913x^2 - 23405x - 227562
\\ \hline
7&11162x^5 + 249139x^4 - 81842x^3 - 137124x^2 + 213695x + 60867
\\ \hline
8&98634x^5 + 258878x^4 + 217977x^3 + 168018x^2 - 195678x - 57868
\\ \hline
9&37951x^5 - 49618x^4 + 171340x^3 + 166043x^2 + 81806x - 53074
\\ \hline
10&243858x^5 - 172816x^4 + 174721x^3 - 183549x^2 + 27505x + 1394
\\ \hline
11&196078x^5 + 7770x^4 + 173897x^3 - 240777x^2 - 62370x - 177733
\\ \hline
12&160271x^5 + 28136x^4 - 190262x^3 - 251564x^2 + 85945x + 96114
\\ \hline
13&-28204x^5 - 111716x^4 + 259330x^3 + 184365x^2 - 134038x - 188451
\\ \hline
14&157683x^5 + 180146x^4 - 175683x^3 + 207384x^2 + 60201x - 235015
\\ \hline
15&156049x^5 + 245627x^4 + 112291x^3 - 70153x^2 + 22757x - 112123
\\ \hline
16&-110436x^5 + 181478x^4 - 179573x^3 + 170745x^2 + 27505x + 178541
\\ \hline
17&196078x^5 + 7770x^4 + 173897x^3 - 240777x^2 - 62370x - 177733
\\ \hline
\end{array}
\end{eqnarray*}
\begin{eqnarray*}
\begin{array}{|l|c|}
\hline
n & j_n \\
\hline
1&-181949x^5 + 191925x^4 + 123820x^3 - 92832x^2 + 68256x - 32042
\\ \hline
2&70082x^5 - 126707x^4 + 223201x^3 - 162933x^2 - 241398x + 66137
\\ \hline
3&-72435x^5 - 250762x^4 + 80515x^3 + 174612x^2 - 75519x - 23327
\\ \hline
4&72391x^5 + 90594x^4 + 104200x^3 - 16026x^2 + 265050x - 231383
\\ \hline
5&45809x^5 + 89617x^4 - 56978x^3 - 260565x^2 - 41706x - 153832
\\ \hline
6&3543x^5 - 183856x^4 - 117449x^3 - 87666x^2 - 235494x - 68606
\\ \hline
7&-36959x^5 + 204318x^4 + 159118x^3 + 49341x^2 - 121563x - 205139
\\ \hline
8&173384x^5 - 180113x^4 - 221003x^3 - 71025x^2 - 172197x - 258079
\\ \hline
9&-97059x^5 - 63571x^4 + 153739x^3 + 126660x^2 + 162540x + 80110
\\ \hline
10&-155057x^5 + 263367x^4 + 8215x^3 - 95001x^2 + 167121x - 27992
\\ \hline
11&-43129x^5 + 95449x^4 + 133291x^3 - 149757x^2 - 93465x - 2200
\\ \hline
12&198186x^5 + 172625x^4 + 212788x^3 - 109536x^2 + 103491x + 198208
\\ \hline
13&-155057x^5 + 263367x^4 + 185362x^3 + 259293x^2 - 10026x - 27992
\\ \hline
14&-43129x^5 + 95449x^4 + 133291x^3 - 149757x^2 - 93465x - 2200
\\ \hline
15&198186x^5 + 172625x^4 + 212788x^3 - 109536x^2 + 103491x + 198208
\\ \hline
16&-155057x^5 + 263367x^4 + 185362x^3 + 259293x^2 - 10026x - 27992
\\ \hline
17&-43129x^5 + 95449x^4 + 133291x^3 - 149757x^2 - 93465x - 2200
\\ \hline
\end{array}
\end{eqnarray*}
Theorem \ref{teichmueller} and Corollary \ref{conv} imply that
$j_{11}=j_{17}$ which agrees with our computation.
One can see from the above computational evidence
that $a_{11}=a_{17}$ and $b_{11}=b_{17}$. 
The scalar which gives
the action of the absolute Frobenius lift $E_{11} \rightarrow E_{17}$
on differentials is congruent $153819$ modulo $3^{12}$.
By the procedure which is described
in Algorithm \ref{countpoints} one computes that the trace of Frobenius
of the reduction of $E$ equals $-38$.
\newline\indent
We have implemented the Algorithm \ref{countpoints}
in the computer algebra programming language Magma \cite{magma}.
Using our experimental implementation we were able to compute the
number of rational points of ordinary elliptic curves
over finite fields of characteristic $3$ of
cryptographic size in a reasonable amount of time.
For example, we computed the
number of points on the elliptic curve given by the equation
\begin{eqnarray}
\label{exx}
y^2 + xy = x^3 + s
\end{eqnarray}
where
{\small
\begin{eqnarray*}
s & = & 2z^{149} + 2z^{148} + 
    2z^{147} + z^{146} + z^{145} + 2z^{144} + z^{143} + 2z^{142} + 2z^{139} \\
&&    + z^{134} + 2z^{133} + z^{131} + z^{130} + 2z^{129} + z^{128} +
    z^{127} \\
&& + 2z^{126} + z^{123} + 2z^{122} + 2z^{120} + z^{119} + 2z^{118} +
    z^{117} \\
&&   + 2z^{113} + 2z^{111} + 2z^{110} + z^{109} + z^{106} + z^{105} +
    z^{102} \\
&&   + 2z^{101} + z^{99} + 2z^{97} + z^{95} + z^{94} + 2z^{93} + z^{91}
    \\
&& + 2z^{90} + z^{88} + 2z^{87} + 2z^{86} + z^{85} + 2z^{84} + 2z^{83}
    \\
&& + z^{82} + z^{81} + z^{80} + 2z^{79} + 2z^{78} + 2z^{76} + z^{75} +
    z^{74} \\
&& + 2z^{72} + 2z^{71} + z^{70} + z^{69} + 2z^{65} + z^{64} + z^{63} \\
&& + 2z^{62} + 2z^{59} + 2z^{58} + z^{57} + z^{56} + z^{55} + z^{54} \\
&& + 2z^{53} + z^{50} + z^{49} + z^{47} + z^{44} + z^{43} + 2z^{41} +
    z^{40} \\
&& + z^{38} + z^{37} + 2z^{36} + z^{33} + z^{32} + z^{30} + z^{28} +
    2z^{27} \\
&& + 2z^{24} + z^{23} + 2z^{19} + 2z^{13} + 2z^{12} + z^{11} + z^{10} \\
&& +   2z^8 + 2z^7 + z^5 + 2z^4 + z^3 + z
\end{eqnarray*}}
over $\xf_{3^{150}}=\xf_3[z]/(f)$
with $f(z)=z^{150}+z^5+z^4+z^3+z+2$.
We computed using Algorithm \ref{countpoints} that the curve (\ref{exx}) has
{\small
\[
369988485035126972924700782451696643480338123589346780021422648929803646
\]}
rational points over the finite field $\xf_{3^150}$.
The computation for this curve took $639$ seconds on an Intel Core 2 Duo
(E7400@2.80GHz) computer with 8GB memory.

\section{Perspectives}

We suggest another
potential application of the explicit formulae
of Section \ref{explicit}.
The example of the introduction shows that
also in the supersingular case the formulae
of Theorem \ref{froblift} can be
used to compute an explicit lift of the relative $p$-Frobenius
morphism. As one can see from the computational
evidence, the computations take place over a ring of integers
which is ramified at $p$.
It is an interesting question whether one
can use our formulae to compute
an explicit lift of the absolute
Frobenius morphism in the supersingular case.
We expect that the answer to this question is positive.
\newline\indent
The algorithms presented in Section \ref{liftalgo} and
Section \ref{pointcounting}
may be improved with respect to their complexity.
This improvement might be relevant in practice. 
The complexity bound given in Theorem \ref{comp} may be
improved to a bound that is linear in $p$.
In order to do so, one has to avoid computing with the $p$-th division
polynomial, which has degree of order $p^2$.
Perhaps it is possible to use the formal group law of the elliptic curve
$E$ instead. We remark that
the formal group incorporates the local part of the $p$-torsion,
or equivalently, the kernel of reduction.
Also it seems to be worthwhile trying to improve the complexity bound
of Theorem \ref{pccomp} with respect to $d$.
One expects that there is an algorithm whose complexity is
essentially quadratic in $d$.

\section{Acknowledgments}

I am grateful to Jaap Top for many
inspiring mathematical discussions which led to the
results of this article.
In fact, this article contains
the results of the first and second chapter of my PhD thesis
that he supervised.
I thank Ben Moonen for pointing out the very simple and
elegant proof of the convergence theorem that we give
in Section \ref{sectconv}. 

\bibliographystyle{plain}

\end{document}